\theoremstyle{plain}
\newtheorem{theorem}{Theorem}[section]
\newtheorem{lemma}[theorem]{Lemma}
\newtheorem{proposition}[theorem]{Proposition}
\newtheorem{assumption}[theorem]{Assumption}
\theoremstyle{remark}
\newtheorem{remark}[theorem]{Remark}
\numberwithin{equation}{section}
\newcommand{\R}{\mathbb{R}}
\newcommand{\N}{\mathbb{N}}
\newcommand{\F}{\mathcal{F}}
\renewcommand{\Im}{\operatorname{Im}}
\renewcommand{\Re}{\operatorname{Re}}
\newcommand{\I}{\infty}
\newcommand{\abs}[1]{\left\lvert #1\right\rvert}
\newcommand{\norm}[1]{\left\lVert #1\right\rVert}
\newcommand{\Lebn}[2]{\left\lVert #1 \right\rVert_{L^{#2}}}
\newcommand{\Sobn}[2]{\left\lVert #1 \right\rVert_{H^{#2}}}
\newcommand{\hSobn}[2]{\left\lVert #1 \right\rVert_{\dot{H}^{#2}}}
\newcommand{\Jbr}[1]{\left\langle #1 \right\rangle}
\newcommand{\ceil}[1]{\left\lceil#1\right\rceil}
\newcommand{\IN}{\quad\text{in }}
\def\({\left(}
\def\){\right)}
\def\<{\left\langle}
\def\>{\right\rangle}
\def\le{\leqslant}
\def\ge{\geqslant}
\def\d{{\partial}}
\def\l{\lambda}
\newcommand{\eps}{\varepsilon}
\begin{document}
\title[Local existence and WKB approximation for 2D-SP]{
Local existence and WKB approximation of solutions to
Schr\"odinger-Poisson system in the two-dimensional whole space}
\author[S. Masaki]{Satoshi Masaki}
\address{Division of Mathematics\\
Graduate School of Information Sciences\\
Tohoku University\\
Sendai 980-8579, Japan}
\email{masaki@ims.is.tohoku.ac.jp}
\begin{abstract}
We consider the Schr\"odinger-Poisson system in the two-dimensional whole space.
A new formula of solutions to the Poisson equation is used.
Although the potential term solving the Poisson equation may grow at the spatial infinity, 
we show the unique existence of a time-local solution for data in the Sobolev spaces
by an analysis of a quantum hydrodynamical
system via a modified Madelung transform.
This method has been used to justify the WKB approximation of solutions
to several classes of nonlinear Schr\"odinger equation in the semiclassical limit.
\end{abstract}
\maketitle

\section{Introduction}\label{sec:intro}
In this article, we study the following Schr\"odinger-Poisson system
\begin{equation}\label{eq:SP}\tag{SP}
	\left\{
	\begin{aligned}
	&i \d_t u + \frac12 \Delta u = \l Pu, \quad (t,x)\in \R^{1+2},\\
	&-\Delta P = |u|^2, \\
	&\nabla P \in L^\I, \quad
	\nabla P \to 0 \text{ as } |x|\to\I, \quad P(0)=0,\\
	&u(0,x) = u_0(x)
	\end{aligned}
	\right.
\end{equation}
or its semiclassical version
\begin{equation}\label{eq:SPe}\tag{$\text{SP}^\eps$}
	\left\{
	\begin{aligned}
	&i \eps\d_t u^\eps + \frac{\eps^2}2 \Delta u^\eps = \l P^\eps u^\eps, \quad (t,x)\in \R^{1+2},\\
	&-\Delta P^\eps = |u^\eps|^2, \\
	&\nabla P^\eps \in L^\I, \quad
	\nabla P^\eps \to 0 \text{ as } |x|\to\I, \quad P^\eps(0)=0,\\
	&u^\eps(0,x) = u_0^\eps(x),
	\end{aligned}
	\right.
\end{equation}
where $\eps$ is a positive parameter corresponding to the Planck constant
and $\l \in \R\setminus\{0\}$ is a given physical constant.
In this article, we assume that the initial data $u_0$ and $u_0^\eps$
belong to the Soblev space $H^s(\R^2)$ with $s>2$,
and show that \eqref{eq:SP} and \eqref{eq:SPe} have a unique 
time local solution in $C((-T,T);H^s(\R^2))$ 
and that, for the solution $u^\eps$ of \eqref{eq:SPe}, the following WKB type 
approximation is valid in certain topology:
\begin{equation}\label{eq:WKB}
	u^\eps(t,x) = e^{i\frac{\phi(t,x)}{\eps}}
	(a_0 + \eps a_1 + \cdots + \eps^N a_N + o(\eps^N)).
\end{equation}
The Schr\"odinger-Poisson system is one of
the typical example of a nonlinear Schr\"odinger equation with a nonlocal nonlinearity,
and  there are much literature on this system
if space dimension is larger than or equal to three
(see \cite{CazBook} and references therein).

However, in contrast with higher dimensional cases,
the two-dimensional case is less studied.
One of the reason may be that an appropriate meaning of a solution
to the Poisson equation is not so clear.
We briefly recall the treatment of the solution in previous results.
For a given function $b(x) \ge 0$ on $\R^2$, called background, let us replace
the Poisson equation in \eqref{eq:SP} with
\begin{equation}\label{eq:mPoisson}
	-\Delta P = |u|^2 - b.
\end{equation}
One of the most natural solution may be
\[
	P_1:=\F^{-1} \left[ \frac1{|\xi|^2} \F (|u|^2-b)\right],
\]
where $\F$ denotes the Fourier transform.
To admit this solution, the following
 ``neutrality condition'' plays a very important role:
\begin{equation}\label{eq:neutrality}
	2\pi\F(|u(t)|^2-b)(0) = \int_{\R^2} (|u(x,t)|^2 - b(x)) dx \equiv0.
\end{equation}
To handle the strong singularity of $|\xi|^{-2}\F (|u|^2-b)$ at the origin,
this mean-zero assumption is almost essential. 
Indeed, in \cite{AN-MMAS,JW-CPDE,ZhSIAM},
\eqref{eq:neutrality} and
several integrability conditions on $\F(|u|^2-b)$ are assumed to prove that
$P_1$ belongs to some Lebesgue space.

One of the main point in this article is to remove the neutrality condition.
To make this point clear,
we restrict our attention to the zero-background case $b\equiv0$ throughout this article.
Notice that all nontrivial solutions are excluded
when we assume the neutrality condition with $b\equiv0$,
and so that the zero-background case is out of the framework of the previous results.
In this article, the Poisson equation $-\Delta P=|u|^2$ is posed with
the following conditions:
\[
	\nabla P \in L^\I, \quad \nabla P \to 0 \text{ as } |x|\to\I, \quad
	\text{and} \quad  P(0)=0.
\]
Under these conditions, the solution $P$ is given by
\begin{equation}\label{eq:formula}
	P(x) = -\frac1{2\pi}\int_{\R^2} \( \log \frac{|x-y|}{|y|} \) |u(y)|^2 dy
\end{equation}
and unique for a class of $|u|^2$ (see Theorem \ref{thm:2dPoisson}).
It might  seem to be more natural to use the Newtonian potential
\[
	P_2(x) := -\frac1{2\pi}\(\log|x|*|u|^2 \)(x),
\]
as the solution of the Poisson equation.
However, 
$P$ given in \eqref{eq:formula} makes sense merely if $|u|^2 \in L^p(\R^2)$ for some $p \in (1,2)$,
and $P_2$ requires an additional assumption
$\int_{\R^2}(\log|y|)|u(y)|^2dy<\I$ to make sense (see Proposition \ref{prop:convolution}).
Namely, $P$ is well-defined under a weaker assumption.
This is the reason why we consider \eqref{eq:formula}.
It will turn out that the behavior of $P$ at the spatial infinity is not so good;
in general, $|P(x)|=O(\Lebn{u}2^2\log|x|)$ as $|x|\to \I$.
It is worth mentioning that $\nabla P$ never belongs to $L^2(\R^2)$
for all $u\not \equiv 0$ no matter how fast $u$ decays,
say even if $u \in C_0^\I(\R^2)$.
We discuss the Poisson equation in the two-dimensional whole space 
more precisely in Appendix \ref{sec:Poisson}.

In what follows, we consider \eqref{eq:formula} as the solution of the Poisson equation.
With \eqref{eq:formula}, the system \eqref{eq:SP} and
\eqref{eq:SPe} are rewritten as
\begin{equation}\label{eq:SP2}\tag{$\text{SP}^\prime$}
	\left\{
	\begin{aligned}
	&i \d_t u + \frac12 \Delta u = \l P u, \\
	&P =  -\frac{1}{2\pi}\int_{\R^2} \( \log \frac{|x-y|}{|y|} \) |u(y)|^2 dy , \\
	&u(0,x) = u_0(x)
	\end{aligned}
	\right.
\end{equation}
and
\begin{equation}\label{eq:SPe2}\tag{$\text{SP}^{\eps\prime}$}
	\left\{
	\begin{aligned}
	&i \eps\d_t u^\eps + \frac{\eps^2}2 \Delta u^\eps = \l P^\eps u^\eps, \\
	&P^\eps= -\frac{1}{2\pi}\int_{\R^2} \( \log \frac{|x-y|}{|y|} \) |u^\eps(y)|^2 dy , \\
	&u^\eps(0,x) = u_0^\eps(x),
	\end{aligned}
	\right.
\end{equation}
respectively.
The difficulty for solving \eqref{eq:SP2} (and \eqref{eq:SPe2})
lies in the growth $|P(x)|=O(\log|x|)$ at the spatial infinity.
For example, the Duhamel term of the corresponding integral equation
does not necessarily belong to any Lebesgue space because of this growth,
and so we cannot apply directly the usual perturbation argument to the integral equation.

Thus, we apply another method:
We look for the solution of the form
\begin{equation}\label{eq:mMT}
	u^\eps(t,x)= a^\eps(t,x) e^{i\frac{\phi^\eps(t,x)}{\eps}},
\end{equation}
where the ``amplitude part'' $a^\eps$ is complex-valued and
the ``phase part'' $\phi^\eps$ is real-valued.
We are considering only \eqref{eq:SPe2}
because \eqref{eq:SP2} corresponds to the special case $\eps=1$.
Plugging \eqref{eq:mMT} to \eqref{eq:SPe2},
we obtain the following system according to the order of $\eps$:
\begin{equation}\label{eq:S}
	\left\{
	\begin{aligned}
	& \d_t a^\eps + \nabla \phi^\eps \cdot \nabla a^\eps + \frac12 a^\eps \Delta \phi^\eps = i\frac\eps{2}\Delta a^\eps, \\
	& \d_t \phi^\eps + \frac12 |\nabla \phi^\eps|^2 + \l P^\eps = 0, \\
	& P^\eps(t,x) =
	-\frac{1}{2\pi}\int_{\R^2} \( \log \frac{|x-y|}{|y|} \) |a^\eps(t,y)|^2 dy, \\
	& a^\eps(0,x) = A^\eps(x), \quad \phi^\eps(0,x) = \Phi(x)
	\end{aligned}
	\right.
\end{equation}
with $u^\eps_0=A^\eps e^{i\Phi/\eps}$.
Notice that if $(a^\eps,\phi^\eps)$ solves \eqref{eq:S},
then $u^\eps=a^\eps e^{i\phi^\eps/\eps}$ is an exact solution of \eqref{eq:SPe2}.
This decomposition, called a modified Madelung transform,
 is first introduced in \cite{Grenier98}
to justify the WKB approximation \eqref{eq:WKB} of solutions
to a class of nonlinear Schr\"odinger equations,
and extended to several types of nonlinear Schr\"odinger equations in
\cite{AC-ARMA,AC-GP,LL-EJDE} (see also \cite{CaBook,CR-CMP,GLM-TJM,PGEP}).
For the WKB approximation of the Schr\"odinger-Poisson system for other dimensions,
we refer the reader to \cite{LT-MAA} (one dimension) and
\cite{AC-SP,CM-AA,MaLarge} (three dimensions and higher).
In \cite{JW-CPDE,ZhSIAM}, the same limit is treated in the two-dimensional case
by Wigner measures.

\smallbreak
Before stating our result precisely, we make some definitions and notation.
$L^p(\R^2)$ ($p\in[1,\I]$) and $H^{s}(\R^2)$ ($s\ge0$) denote the usual Lebesgue
and Sobolev space, respectively.
We say $f\in H^s_{\mathrm{loc}}(\R^2)$ if any restriction of $f$ on a bounded domain
belongs to $H^s(\R^2)$.
$X^s(\R^2)$ is the Zhidkov space defined by
\begin{align*}
	X^s(\R^2):= \{ f\in L^\I(\R^2) ; \nabla f \in H^{s-1}(\R^2)\}, \\
	\norm{f}_{X^s(\R^2)}:= \norm{f}_{L^\I(\R^2)} + \norm{\nabla f}_{H^{s-1}(\R^2)}
\end{align*}
for $s>1$. This space is introduced in \cite{Zhidkov} (see also \cite{Gallo}).
We sometimes write $L^p=L^p(\R^2)$, $H^s=H^s(\R^2)$, and $X^s=X^s(\R^2)$,
for short.

\subsection{Main result 1}
We first state our result on the unique existence of a local solution to \eqref{eq:SP2}.
\begin{theorem}\label{thm:main1}
Let $u_0 \in H^s(\R^2)$ with $s>2$.
Then, there exist an existence time $T=T(\norm{u_0}_{H^{s-1}(\R^2)})$ and
a time-local solution $u\in C((-T,T);H^s(\R^2))\cap C^1((-T,T);H^{s-2}_{\mathrm{loc}}(\R^2))$ to \eqref{eq:SP2}.
Moreover, this solution is unique in this space and the 
data-to-solution mapping
$u_0 \mapsto u$ is continuous from $H^s(\R^2)$ to $C((-T,T);H^{s-1}(\R^2))$.
Furthermore, the mass is conserved.
\end{theorem}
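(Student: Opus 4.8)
The plan is to bypass the Duhamel/integral formulation, which is obstructed by the logarithmic growth of $P$, and instead to solve the hydrodynamic system \eqref{eq:S} with $\eps=1$ arising from the modified Madelung transform \eqref{eq:mMT}. For \eqref{eq:SP2} we may take the trivial initial decomposition $\Phi\equiv 0$, $A=u_0$, so that the unknowns are the complex amplitude $a$ with $a(0)=u_0\in H^s$ and the real velocity field $v:=\nabla\phi$ with $v(0)=0$. The decisive point is that, although $\phi$ and $P$ both grow like $\log|x|$, differentiating the eikonal equation removes the growth: taking $\nabla$ of the second line of \eqref{eq:S} gives
\begin{equation*}
	\d_t v + (v\cdot\nabla)v + \l\nabla P = 0,\qquad \nabla P(x) = -\frac1{2\pi}\int_{\R^2}\frac{x-y}{|x-y|^2}|a(y)|^2\,dy,
\end{equation*}
and $\nabla P$ is bounded and decaying. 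Coupled with the amplitude equation $\d_t a + v\cdot\nabla a + \tfrac12 a\,\nabla\cdot v = \tfrac{i}2\Delta a$, this is a closed first-order system for $(a,v)$ in which the offending potential enters only through its gradient.

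Next I would cast this in Grenier's symmetric-hyperbolic form. Splitting $a=a_1+ia_2$ and setting $U=(a_1,a_2,v)$, the system reads $\d_t U + \sum_{j=1}^2 A_j(U)\d_j U = \tfrac{i}2 L U + \l F(U)$, where the principal first-order part is symmetrizable by a bounded positive-definite symmetrizer, the second-order term $\tfrac{i}2 LU$ (the Laplacian acting on the amplitude) is skew-adjoint in $L^2$ and hence invisible to the energy, and $F$ carries the nonlocal forcing $\nabla P$. Because $\nabla P\notin L^2$ for any $a\not\equiv0$ (as recalled after \eqref{eq:formula}), the velocity cannot be sought in an $L^2$-based space; the natural framework is $a\in H^s$ together with $v$ in the Zhidkov space $X^s$, i.e. $v\in L^\infty$ with $\nabla v\in H^{s-1}$. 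The coupling is controlled because $\nabla^2 P$ is a Calder\'on--Zygmund transform of $|a|^2$, so $\norm{\nabla^2 P}_{H^{s-1}}\lesssim\norm{|a|^2}_{H^{s-1}}\lesssim\norm{a}_{H^{s-1}}^2$ while $\norm{\nabla P}_{L^\infty}\lesssim\norm{a}_{H^{s-1}}^2$ by the properties of the solution formula \eqref{eq:formula} (Theorem \ref{thm:2dPoisson}); here $s>2$ enters decisively through the algebra property of $H^{s-1}(\R^2)$.

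The core is then an a priori energy estimate. Differentiating $s-1$ times (handling $v$ through $\nabla v\in H^{s-1}$) and pairing against the symmetrizer, the dispersive term disappears by skew-adjointness, the quasilinear transport terms are treated by the usual commutator/integration-by-parts device against $\norm{\nabla v}_{L^\infty}+\norm{\nabla a}_{L^\infty}$ (finite since $s-1>1$), and the nonlocal forcing is absorbed by the Calder\'on--Zygmund bound. Since $\nabla P$ depends on $a$ only through $|a|^2$ at one lower order and since $v(0)=0$, a tame two-tier estimate shows the $H^s\times X^s$ norm grows at a rate governed by $\norm{u_0}_{H^{s-1}}$, which yields the existence time $T=T(\norm{u_0}_{H^{s-1}})$. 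I would then realize the solution by a regularized (mollified or parabolically regularized) iteration, obtain uniform bounds from the energy estimate, extract a limit by compactness, and upgrade weak to strong $C((-T,T);H^s\times X^s)$ convergence by the Bona--Smith argument. Recovering $\phi$ from $\d_t\phi=-\tfrac12|v|^2-\l P$ and forming $u=ae^{i\phi}$ gives an exact solution of \eqref{eq:SP2}; from $\nabla u=(\nabla a + iav)e^{i\phi}$, together with the fact that $e^{i\phi}$ is a multiplier on $H^{s-1}$ for a Zhidkov phase, one reads off $u\in C((-T,T);H^s)$, and the equation itself yields $u\in C^1((-T,T);H^{s-2}_{\mathrm{loc}})$.

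Uniqueness and continuous dependence follow from an energy estimate on the difference of two solutions. As is typical for quasilinear systems this difference estimate loses one derivative—the difference of the transport coefficients multiplies a top-order derivative—so one controls the difference only in $H^{s-1}$, which is exactly why the data-to-solution map is asserted continuous into $C((-T,T);H^{s-1})$ rather than $H^s$. Mass conservation is immediate: since $P$ is real-valued and $\tfrac{i}2\Delta$ is skew-adjoint, $\tfrac{d}{dt}\norm{u}_{L^2}^2 = 2\Re\int\bar u\,\d_t u = 2\Re\int\bar u\,\bigl(\tfrac{i}2\Delta u - i\l P u\bigr)=0$ for $u\in H^s$. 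The main obstacle throughout is the nonlocal, non-$L^2$, logarithmically growing potential: the whole scheme hinges on passing to $v=\nabla\phi$ so that only the bounded field $\nabla P$ appears, on measuring $v$ in the Zhidkov space rather than in an $L^2$-based one, and on closing the energy estimate for this coupled Schr\"odinger--Zhidkov system despite $\nabla P\notin L^2$.
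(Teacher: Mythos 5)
Your construction of the solution is essentially the paper's own: the modified Madelung transform, the velocity formulation $v=\nabla\phi$ killing the logarithmic growth, the Zhidkov-space framework for $v$, the Calder\'on--Zygmund bound $\tSobn{\nabla^2 P}{\sigma}\lesssim\tSobn{|a|^2}{\sigma}$, and the recovery of $\phi$ by time integration are exactly the content of Theorem \ref{thm:base} and Section \ref{sec:proof1}. (One secondary point you gloss over: since $\phi$ itself grows like $\log|x|$, the assertion that $e^{i\phi}$ acts boundedly on $H^{s}$, and especially the \emph{time-continuity} of $t\mapsto a(t)e^{i\phi(t)}$ in $H^s$, is not free; the paper devotes Lemmas \ref{lem:HsofWKB} and \ref{lem:ContHsofWKB} to proving estimates that use only $\nabla\phi$ and $\nabla^2\phi$, never a bound on $\phi$.)

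The genuine gap is in your uniqueness argument. Your difference energy estimate (with its ``transport coefficients'' and one-derivative loss) is an estimate for the quasilinear system \eqref{eq:S1}, so it proves uniqueness of $(a,v)$ --- i.e.\ uniqueness of those solutions of \eqref{eq:SP2} that are \emph{of the form} $u=ae^{i\phi}$ with $(a,\phi)$ solving \eqref{eq:S1}. The theorem asserts uniqueness in the class $C((-T,T);H^s)\cap C^1((-T,T);H^{s-2}_{\mathrm{loc}})$, and an arbitrary solution in that class is not a priori of Madelung form; this implication is precisely what the paper flags as the novel difficulty and settles with Lemma \ref{lem:unique}: given any solution $u$, one solves the auxiliary Hamilton--Jacobi equation $\d_t\psi+\frac12|\nabla\psi|^2+\l P=0$, $\psi(0)=0$, with $P$ determined by $u$ as a source, and then uses the gauge bijection $(u,\psi)\mapsto(ue^{-i\psi},\psi)$ --- legitimate because $P$ depends only on $|u|$ --- to identify solutions of \eqref{eq:SP2} with solutions of \eqref{eq:S1}. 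Nor can you escape by running the difference estimate directly on two solutions $u_1,u_2$ of \eqref{eq:SP2}: the term $(P_1-P_2)u_2$ is then the obstruction, since $P_1-P_2$ is the potential generated by $|u_1|^2-|u_2|^2$, which in general grows at spatial infinity (by \eqref{eq:log} one only gets $O(\log|x|)$, and even using conservation of mass to get neutrality of the difference, boundedness of $P_1-P_2$ requires spatial decay of the densities that mere $H^s$ membership does not provide). So the step ``uniqueness of the hydrodynamic system $\Rightarrow$ uniqueness for \eqref{eq:SP2}'' is missing, and it is not a formality --- it is the part of the proof the paper itself identifies as new.
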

\begin{remark}
The existence part of Theorem \ref{thm:main1}
holds if $u_0 \in H^s(\R^2)$ ($s>1$).
\end{remark}
\begin{remark}
For the solution $u$ given in Theorem \ref{thm:main1},
$\d_t u$ does not necessarily decays at the spatial infinity.
This is due to the lack of spatial decay of $P$.
Therefore, in general,
$u \in C^1((-T,T);H^{s-2}_{\mathrm{loc}}(\R^2))$
is not replaced by $u \in C^1((-T,T);H^{s-2}(\R^2))$.
Nevertheless, the solution is continuous as a $H^s(\R^2)$-valued function.
\end{remark}
\begin{remark}
The uniqueness of the solution is new in the sense not only  that the result itself is new
but also that we use a new argument.
In our proof, we derive the uniqueness of \eqref{eq:SP2}
from that of the system \eqref{eq:S} via \eqref{eq:mMT}
by showing that every solution of \eqref{eq:SP2} is of the form \eqref{eq:mMT},
at least for small time.
\end{remark}
\begin{remark}
The solution given by Theorem \ref{thm:main1}
does not necessarily have a finite energy. The energy
\[
	E[u](t) = \frac12\Lebn{\nabla u(t)}2^2 - \frac{\l}{4\pi}\iint_{\R^2\times\R^2}
	(\log|x-y|)|u(t,x)|^2|u(t,y)|^2 dx dy
\]
is finite and conserved 
as long as $W(t):=\int_{0}^t \int_{\R^2}(\log|y|)|u(s,y)|^2 dy ds$ is finite.
Indeed, if $W(t)<\I$ then $\widetilde{u}(t)=u(t)e^{i\frac{\l}{2\pi}W(t)}$ solves
\begin{equation}\label{eq:SN}
	i\d_t \widetilde{u} + \frac12 \Delta \widetilde{u} = -\frac{\l}{2\pi}\widetilde{u}
	\int_{\R^2}(\log|x-y|)|\widetilde{u}(y)|^2dy
\end{equation}
and $E[\widetilde{u}](t)$ is conserved.
Then, one sees that 
$E[{u}]=E[\widetilde{u}]$ is also conserved.
The ground state for \eqref{eq:SN} is studied in \cite{Stu2DSN}.
\end{remark}

\subsection{Main result 2}
We next state the WKB approximation of the solution to \eqref{eq:SPe2}.
Suppose that the initial data is of the form
\[
	u^\eps_0(x) = A^\eps(x) e^{i\frac{\Phi}\eps}.
\]
We first make assumption on $u^\eps_0$, that is, on $(A^\eps,\Phi)$.
\begin{assumption}\label{asmp:main2}
Let $N \ge 1$ denote an ``expansion level''.
We assume the following for some $s>3+4N$:
\begin{itemize}
\item $A^\eps \in H^s(\R^2)$ and $\norm{A^\eps}_{H^{s}(\R^2)}$
is uniformly bounded.
Moreover, $A^\eps$ is expanded as
\[
	A^\eps = A_0 + \eps A_1 + \cdots + \eps^{2N} A_{2N} + o(\eps^{2N})
	\IN H^s(\R^2).
\]
Furthermore, there exists a positive number $\alpha \in (0,1]$ such that
\[
	\int_{\R^2} |x|^{\alpha} |A_j|^2 < \I
\]
holds for $j=0,1,2,\dots,N$.
\item $\Phi \in C^1([0,T)\times \R^2)$ with $\nabla \Phi \in X^{s+1}(\R^2)\cap L^p
(\R^2)$ for some $p\in (2,\I)$.
\end{itemize}
\end{assumption}
\begin{theorem}\label{thm:main2}
Let Assumption \ref{asmp:main2} be satisfied for a positive integer $N$.
Then, there exist an existence time $T$ independent of $\eps$ and
a unique time-local solution $u^\eps \in C([0,T);H^s(\R^2))\cap C^1((0,T);H^{s-2}_{\mathrm{loc}}(\R^2))$ to \eqref{eq:SP2}.
Moreover, there exist $\beta_j \in C([0,T); L^2(\R^2))$ ($0\le j < N$) and $\phi_0 \in C([0,T)\times\R^2)$ such that
\[
	u^\eps = e^{i\frac{\phi_0}{\eps}}(\beta_0 + \eps \beta_1 + \cdots
	+ \eps^{N-1} \beta_{N-1} + o(\eps^{N-1})) \IN L^\I([0,T);L^2(\R^2))
\]
for $0<\eps \le 1$.
\end{theorem}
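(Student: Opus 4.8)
The plan is to realize the solution via the modified Madelung transform \eqref{eq:mMT} and to analyze the hydrodynamic system \eqref{eq:S} with all estimates uniform in $\eps$. First I would prove that \eqref{eq:S} admits a solution $(a^\eps,\phi^\eps)$ on an interval $[0,T)$ whose length is independent of $\eps\in(0,1]$, with $\norm{a^\eps}_{H^s}$ and $\norm{\nabla\phi^\eps}_{X^{s+1}}$ bounded uniformly in $\eps$. Writing $a^\eps=a_1^\eps+ia_2^\eps$ and $v^\eps=\nabla\phi^\eps$, the equations for $(a_1^\eps,a_2^\eps,v^\eps)$ form a quasilinear symmetric hyperbolic system, and the semiclassical term $i\frac\eps2\Delta a^\eps$ is skew-adjoint, so it contributes nothing to the $H^s$ energy estimate; this is precisely the mechanism behind Theorem \ref{thm:main1} (the case $\eps=1$), and the task is to run the same energy estimate with $\eps$-independent constants. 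The nonlocal forcing $\l\nabla P^\eps$ enters only through $\nabla P^\eps$, which by the Poisson analysis of the Appendix (Theorem \ref{thm:2dPoisson}, Proposition \ref{prop:convolution}) is controlled in $L^\infty\cap H^s$ by $\norm{a^\eps}_{H^s}$; the logarithmic growth of $P^\eps$ itself is harmless because $P^\eps$ never appears undifferentiated in the $v^\eps$-equation. Uniqueness of $u^\eps$ then follows as in Theorem \ref{thm:main1}, by showing every solution of \eqref{eq:SPe2} is of the form \eqref{eq:mMT} for small time.

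Second, I would construct the formal WKB hierarchy. Expanding $a^\eps=\sum_j\eps^j a_j$ and $\phi^\eps=\phi_0+\sum_{j\ge1}\eps^j\phi_j$ and collecting powers of $\eps$ in \eqref{eq:S}, the leading pair $(a_0,\phi_0)$ solves the limit system (a transport equation for $a_0$ and the Hamilton--Jacobi equation $\d_t\phi_0+\frac12|\nabla\phi_0|^2+\l P_0=0$, with $P_0$ generated by $|a_0|^2$) with data $(A_0,\Phi)$, while for $j\ge1$ the pair $(a_j,\phi_j)$ solves a linear transport and linearized-eikonal system forced by lower-order terms (the source for $a_j$ contains $\frac i2\Delta a_{j-1}$ and the $\eps^j$-correction of $P^\eps$), with data $a_j(0)=A_j$, $\phi_j(0)=0$. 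Each linear system is solvable in $H^{s'}$ on $[0,T)$ by the same symmetric-hyperbolic estimates, losing a fixed number of derivatives per level; this, together with the phase manipulation below, is what forces Assumption \ref{asmp:main2} to require $s>3+4N$ and amplitude data through order $\eps^{2N}$. Setting $a^\eps_{\mathrm{app}}=\sum_{j\le2N}\eps^j a_j$ and $\phi^\eps_{\mathrm{app}}=\phi_0+\sum_{1\le j\le2N}\eps^j\phi_j$, a stability estimate for the difference $(a^\eps-a^\eps_{\mathrm{app}},\nabla\phi^\eps-\nabla\phi^\eps_{\mathrm{app}})$—again exploiting the skew-adjointness of $i\frac\eps2\Delta$ together with Gronwall—shows that this difference is of high order $O(\eps^{2N+1})$ in the energy norm on $[0,T)$.

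Finally I would translate back to $u^\eps=a^\eps e^{i\phi^\eps/\eps}$. Factoring $e^{i\phi^\eps/\eps}=e^{i\phi_0/\eps}\,e^{i(\phi^\eps-\phi_0)/\eps}$ and noting $(\phi^\eps-\phi_0)/\eps=\phi_1+\eps\phi_2+\cdots$, one expands the modulus-one factor $e^{i(\phi^\eps-\phi_0)/\eps}=e^{i\phi_1}(1+i\eps\phi_2+\cdots)$ and multiplies by the amplitude expansion; collecting powers of $\eps$ defines $\beta_0=a_0e^{i\phi_1}$, $\beta_1=(a_1+ia_0\phi_2)e^{i\phi_1}$, and so on, with $\phi_0$ as the announced leading phase. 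Because the modulus-one factor is divided by $\eps$ before exponentiation, the $O(\eps^{2N+1})$ control on the phase is converted into the stated $o(\eps^{N-1})$ expansion for $u^\eps$, the factor-of-two gap reflecting this loss.

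I expect the genuine obstacle to lie in this last step. The phase corrections $\phi_j$ inherit the logarithmic growth of $P$, so products such as $\phi_2\,a_0$, $a_0\phi_2^2$, $a_1\phi_2$ that appear in the $\beta_j$ are a priori unbounded; they are square-integrable only because the weighted bounds $\int_{\R^2}|x|^\alpha|A_j|^2<\I$ in Assumption \ref{asmp:main2} force the amplitudes to decay fast enough to absorb the growth $\phi_j=O(\log|x|)$, via $\log|x|\lesssim|x|^{\alpha/2}$. Controlling the full remainder in $L^2$ despite the unbounded phase—rather than the uniform hyperbolic estimates of the first two steps, which are lengthy but essentially routine once $\nabla P^\eps$ is handled by the Appendix—is the reason the conclusion is stated in the weak $L^\infty([0,T);L^2(\R^2))$ topology and is where the weighted hypotheses are indispensable.
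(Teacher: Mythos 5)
Your architecture matches the paper's: solve the hydrodynamic system \eqref{eq:S} with $\eps$-uniform bounds (this is Theorem \ref{thm:base}), expand $(a^\eps,\phi^\eps)$ in powers of $\eps$ (Proposition \ref{prop:expansion}), factor $e^{i\phi^\eps/\eps}=e^{i\phi_0/\eps}e^{i\phi_1}e^{i\eps\phi_2}\cdots$, and collect powers to define $\beta_0=a_0e^{i\phi_1}$, etc.; you also correctly locate the obstruction in the logarithmic growth of the phase corrections $\phi_j$. But there is a genuine gap at exactly that point: the weighted hypotheses $\int_{\R^2}|x|^{\alpha}|A_j|^2<\I$ in Assumption \ref{asmp:main2} concern the \emph{initial} amplitudes, whereas the $L^2$ expansion needs weighted decay of $a_j(t)$ for $t>0$, and weighted $L^2$ bounds are not automatically transported: each $a_j$ solves a transport equation driven by $\nabla\phi_0$ with sources containing $\tfrac{i}{2}\Delta a_{j-1}$, and your sketch nowhere shows the weight survives this evolution. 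The paper devotes Lemma \ref{lem:weightL2} and Lemma \ref{lem:weight} to precisely this propagation, and it is delicate: one differentiates $\int(1+|x|)^{\alpha/2^k}|a^\eps(t)|^2\,dx$ in time, compares the $\eps^{2k}$-order coefficients of both sides, and controls the cross terms $2\Re(a_{2k-l}\overline{a_l})$ by Cauchy--Schwarz against weights already established at lower levels. In the process the admissible weight degrades geometrically, so $a_j(t)$ retains only the weight $(1+|x|)^{\alpha/2^j}$ rather than $|x|^{\alpha}$, which your absorption ``$\log|x|\lesssim|x|^{\alpha/2}$'' tacitly assumes; the degraded weight still suffices (any positive power of $|x|$ beats a power of $\log|x|$), but the mechanism has to be proved, and it is the heart of the proof rather than a routine consequence of the hypotheses.

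This propagation step is also the true source of the factor of two in the hypotheses, and here your accounting is off. The induction at level $k$ in Lemma \ref{lem:weight} produces cross terms $a_{2k-l}$ with indices up to $2k\le 2N$, which is why Assumption \ref{asmp:main2} demands the amplitude expansion to order $\eps^{2N}$ and $s>3+4N$ (i.e.\ $s>3+2N_0$ with $N_0=2N$ in Assumption \ref{asmp:expansion}). Your claim that dividing the phase by $\eps$ before exponentiating converts $O(\eps^{2N+1})$ control into the stated $o(\eps^{N-1})$ does not balance: division by $\eps$ costs exactly one order --- the phase expansion $\phi^\eps=\phi_0+\eps\phi_1+\cdots+\eps^N\phi_N+o(\eps^N)$ yields $(\phi^\eps-\phi_0)/\eps$ expanded to $o(\eps^{N-1})$ --- and that alone explains the drop from $N$ to $N-1$ in the conclusion; the remaining factor of two is consumed by the weighted-decay induction, not by the exponentiation. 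Repairing your argument therefore requires importing the weighted mass identity of Lemma \ref{lem:weightL2} (whose $\eps$-term $\eps\Im\int(\nabla w\cdot\nabla a^\eps)\overline{a^\eps}\,dx$ vanishes in the relevant orders only after the order-by-order comparison) and running the induction of Lemma \ref{lem:weight} before any of the $\beta_j$ can be asserted to lie in $C([0,T);L^2(\R^2))$.
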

\begin{remark}
Our analysis is still valid in the presence of background.
If we take some nonnegative function $b \in L^1(\R^2)\cap H^s(\R^2)$
and if we change the Poisson equation into $-\Delta P^\eps=|u^\eps|^2-b$,
then the same result as in Theorem \ref{thm:main2} holds without the neutrality
condition $\int_{\R^2}(|u^\eps|^2-b)dx=0$.
\end{remark}

\smallbreak
The rest of this article is organized as follows:
In Section \ref{sec:preliminary}, we first show the existence of a solution to 
the system \eqref{eq:S} on which
our main theorems are all based.
Then, we prove Theorems \ref{thm:main1} and \ref{thm:main2}
in Sections \ref{sec:proof1} and \ref{sec:proof2}, respectively.
In Appendix \ref{sec:Poisson}, we summarize results on
the Poisson equation in the two-dimensional whole space.
They play  important roles in our argument.

\section{Preliminary result}\label{sec:preliminary}
In this section, we establish an existence result on the system
\begin{equation}\tag{\ref{eq:S}}
	\left\{
	\begin{aligned}
	& \d_t a^\eps + \nabla \phi^\eps \cdot \nabla a^\eps + \frac12 a^\eps \Delta \phi^\eps = i\frac\eps{2}\Delta a^\eps, \\
	& \d_t \phi^\eps + \frac12 |\nabla \phi^\eps|^2 + \l P^\eps = 0, \\
	& P^\eps (t,x)=
	-\frac{1}{2\pi}\int_{\R^2} \( \log \frac{|x-y|}{|y|} \) |a^\eps(t,y)|^2 dy, \\
	& a^\eps(0,x) = A^\eps(x), \quad \phi^\eps(0,x) = \Phi(x).
	\end{aligned}
	\right.
\end{equation}

\begin{assumption}\label{asmp:base}
We assume the following for some $s>2$:
\begin{itemize}
\item $A^\eps \in H^s(\R^2)$  and $\norm{A^\eps}_{H^s(\R^2)}$
is uniformly bounded.
\item $\Phi \in C^1([0,T)\times \R^2)$ with $\nabla \Phi \in X^{s+1}(\R^2)\cap L^p
(\R^2)$ for some $p\in (2,\I)$.
\end{itemize}
\end{assumption}
\begin{theorem}\label{thm:base}
Let Assumption \ref{asmp:base} be satisfied.
Then, 
there exist $T>0$ independent of $\eps$ and a unique solution
\begin{align*}
	&a^\eps \in C([0,T); H^s(\R^2)) \cap C^1((0,T);H^{s-2}(\R^2)) ,\\
	&\phi^\eps \in C^1([0,T)\times \R^2)
\end{align*}
of \eqref{eq:S}.
Moreover, $\norm{a^\eps}_{H^s(\R^2)}$ and
$\norm{\nabla \phi^\eps}_{X^{s+1}(\R^2)\cap L^p (\R^2)}$ are uniformly bounded,
and the data-to-solution mapping $(A^\eps,\nabla \Phi) \mapsto (a^\eps,\nabla \phi^\eps)$ is continuous from $H^{s}(\R^2) \times (X^{s+1}(\R^2)\cap L^p(\R^2))$ to
$C([0,T);H^{s-1}(\R^2)\times (X^{s}(\R^2)\cap L^p(\R^2)))$.
The mass $\norm{a^\eps(t)}_{L^2(\R^2)}$ is conserved and
it holds that
\[
	\limsup_{|x|\to\I}\frac{|\phi^\eps(t,x)-\Phi(x)|}{\log|x|}\le \frac{t|\l|}{2\pi}\Lebn{A^\eps}2^2.
\]
Furthermore, if $s>3$ and $A_0 := \lim_{\eps\to0} A^\eps$ exists in $H^s(\R^2)$ then
the following properties hold:
\begin{itemize}
\item $(a_0,\phi_0):=(a^\eps,\phi^\eps)_{\eps=0}$ exists in the same class
and solves
\begin{equation}\label{eq:S0}
	\left\{
	\begin{aligned}
	& \d_t a_0 + \nabla \phi_0 \cdot \nabla a_0 + \frac12 a_0 \Delta \phi_0 = 0, \\
	& \d_t \phi_0 + \frac12 |\nabla \phi_0|^2 + \l P_0 = 0, \\
	& a_0(0,x) = A_0(x), \quad \phi_0(0,x) = \Phi(x),
	\end{aligned}
	\right.
\end{equation}
where $P_0$ is the Poisson term defined from $a_0$ by \eqref{eq:formula},
and $(a^\eps,\nabla \phi^\eps)$ converges to $(a_0,\nabla \phi_0)$ as $\eps\to0$
in the $C([0,T]; H^{s-2}\times (X^{s-2} \cap L^{2+}))$ topology.
\item For any bounded domain $\Omega \subset \R^2$, it holds that
$\norm{\phi^\eps - \phi_0}_{L^\I([0,T]\times \Omega)}\to 0$ as $\eps \to 0$.
\end{itemize}
\end{theorem}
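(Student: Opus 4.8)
The plan is to follow Grenier's symmetrization strategy, adapted to the nonlocal and logarithmically growing potential. First I would introduce the velocity variable $v^\eps := \nabla\phi^\eps$ and differentiate the eikonal equation in \eqref{eq:S}, turning the phase equation into $\d_t v^\eps + (v^\eps\cdot\nabla)v^\eps = -\l\nabla P^\eps$ and thereby recasting \eqref{eq:S} as a first-order system for $U^\eps := (\Re a^\eps, \Im a^\eps, v^\eps)$. Writing $a^\eps = a_1 + i a_2$, the term $i\frac\eps2\Delta a^\eps$ becomes $\frac\eps2$ times the skew-adjoint operator $\bigl(\begin{smallmatrix} 0 & -\Delta \\ \Delta & 0 \end{smallmatrix}\bigr)$ acting on $(a_1,a_2)$, while the quasilinear transport part couples $a^\eps$ to $\mathrm{div}\,v^\eps$ through $\frac12 a^\eps\,\mathrm{div}\,v^\eps$. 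The crucial structural fact is that the skew-adjoint $\eps$-term contributes nothing to $L^2$-type energies: in $\frac{d}{dt}\norm{a^\eps}_{H^s}^2$ the dispersive contribution equals $\eps\,\Re\langle i\Delta \langle\nabla\rangle^s a^\eps, \langle\nabla\rangle^s a^\eps\rangle = 0$. This is exactly what yields bounds uniform in $\eps$.

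Second, I would record the mapping properties of the Poisson term from Appendix~\ref{sec:Poisson}. Since $-\Delta P^\eps = |a^\eps|^2$, the forcing $\nabla P^\eps$ is the convolution of $|a^\eps|^2$ with the kernel $-\frac{1}{2\pi}\frac{x}{|x|^2}$; it is bounded in $L^\I$, and because $\nabla^2 P^\eps$ is a Calder\'on--Zygmund transform of $|a^\eps|^2$ one obtains $\norm{\nabla P^\eps}_{X^{s+1}\cap L^p}\le C(\norm{a^\eps}_{H^s})$, using that $H^s$ is an algebra for $s>1$ for the $X^{s+1}$ piece and Hardy--Littlewood--Sobolev for the $L^p$ piece. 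The logarithmic growth of $P^\eps$ itself never enters: only $\nabla P^\eps$ and its derivatives appear in the $v^\eps$-equation. This explains why the Zhidkov space $X^{s+1}$ is the natural framework: it permits $v^\eps\in L^\I$ without $L^2$ control while measuring $\nabla v^\eps\in H^s$, and the extra degree on $v^\eps$ relative to $a^\eps\in H^s$ precisely balances the derivative in $\frac12 a^\eps\,\mathrm{div}\,v^\eps$, so that no net loss of derivatives occurs. The $L^p$ norm of $v^\eps$ is propagated by the transport structure, with forcing $\nabla P^\eps\in L^p$, and $\norm{v^\eps}_{L^\I}$ is then recovered from $\norm{v^\eps}_{L^p}$ and $\norm{\nabla v^\eps}_{H^s}$ by Gagliardo--Nirenberg in two dimensions.

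Third, with these estimates in hand I would run the energy method. Define $E^\eps(t) := \norm{a^\eps(t)}_{H^s}^2 + \norm{\nabla\phi^\eps(t)}_{X^{s+1}\cap L^p}^2$, apply $\langle\nabla\rangle^s$, commute through the transport and coupling terms by Kato--Ponce-type commutator and product estimates, invoke the Poisson bound of the previous step, and close $\frac{d}{dt}E^\eps\le C(E^\eps)$ with constant independent of $\eps$ thanks to the skew-symmetry of the dispersive term. This gives the uniform existence time $T$ and the uniform bounds. Existence follows by a regularized iteration (or parabolic regularization) and passage to the limit via these bounds; uniqueness and the continuous dependence of $(A^\eps,\nabla\Phi)\mapsto(a^\eps,\nabla\phi^\eps)$ follow from the same estimates applied to the difference of two solutions in the lower norm $H^{s-1}\times(X^s\cap L^p)$. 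Mass conservation is a direct computation: multiplying the $a^\eps$-equation by $\overline{a^\eps}$ and taking real parts, the contributions of $v^\eps\cdot\nabla a^\eps$ and $\frac12 a^\eps\,\mathrm{div}\,v^\eps$ cancel after one integration by parts, and the dispersive term is purely imaginary. The phase asymptotic follows by integrating the eikonal equation in time: since $\nabla\phi^\eps\in L^\I$ keeps $|\nabla\phi^\eps|^2$ bounded, the $\log|x|$ growth comes solely from $P^\eps(t,x)\sim-\frac{1}{2\pi}(\log|x|)\norm{a^\eps(t)}_{L^2}^2$ via \eqref{eq:formula}, with $\norm{a^\eps(t)}_{L^2}=\norm{A^\eps}_{L^2}$ by mass conservation, yielding the stated bound $\frac{t|\l|}{2\pi}\norm{A^\eps}_{L^2}^2$.

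Finally, for the semiclassical limit I would exploit the $\eps$-uniform bounds. Assuming $A_0 := \lim_{\eps\to0}A^\eps$ exists in $H^s$, the difference $(a^\eps - a_0,\,v^\eps - v_0)$ satisfies the same linearized system with an $O(\eps)$ source arising from $i\frac\eps2\Delta a^\eps$; a difference energy estimate in the lower norm then gives convergence in $C([0,T];H^{s-2}\times(X^{s-2}\cap L^{2+}))$, the loss of two derivatives being the cost of controlling the $\eps\Delta$ source. Passing $\eps\to0$ identifies the limit $(a_0,\phi_0)$ as the solution of \eqref{eq:S0}, the Poisson term $P_0$ being the limit of $P^\eps$ through the convergence of $a^\eps$. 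The local uniform convergence $\phi^\eps\to\phi_0$ on bounded $\Omega$ follows by integrating the two eikonal equations and using the just-obtained convergence of $\nabla\phi^\eps$ together with that of $P^\eps$. I expect the main obstacle to be the careful treatment of the Poisson forcing within the Zhidkov framework: verifying that the non-decaying nonlocal term $\l\nabla P^\eps$ is compatible with the energy estimates and that its difference is Lipschitz in the correct low norm, since the absence of $L^2$ control on $v^\eps$ and the lack of decay of $\nabla P^\eps$ place this outside the standard decaying-potential theory.
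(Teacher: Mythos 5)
Your proposal follows essentially the same route as the paper: Grenier's symmetrization via $v^\varepsilon=\nabla\phi^\varepsilon$, recasting \eqref{eq:S} as the hydrodynamic system \eqref{eq:SHS}, uniform-in-$\varepsilon$ energy estimates in $H^s\times(X^{s+1}\cap L^p)$ exploiting the skew-adjointness of $\frac{i\varepsilon}{2}\Delta$, Poisson bounds via the Riesz transform and Hardy--Littlewood--Sobolev, mass conservation by the cancellation you describe, reconstruction of $\phi^\varepsilon$ by time integration of the eikonal equation (together with propagation of irrotationality and an appeal to uniqueness to identify $\nabla\phi^\varepsilon=v^\varepsilon$), and the logarithmic growth bound from \eqref{eq:log} combined with conservation of $\|a^\varepsilon(t)\|_{L^2}$. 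All of that matches the paper, which in fact only sketches the existence estimate and spends its effort on uniqueness and on the limit $\varepsilon\to0$.

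There is, however, one concrete gap. You assert that a single difference energy estimate ``gives convergence in $C([0,T];H^{s-2}\times(X^{s-2}\cap L^{2+}))$,'' but the direct estimate only propagates the $L^r$ difference of the velocities for $r\in[p,\infty]$, the exponents for which $\nabla\Phi$ and the Poisson forcing are controlled a priori; and $r=2$ is genuinely unreachable, since $\nabla(-\Delta)^{-1}$ does not map into $L^2$ in the absence of the neutrality condition (Remark \ref{rmk:nPL2}). So if, say, $p=10$, nothing in your argument yields $L^{5/2}$ convergence, whereas the theorem claims all $r>2$. The paper bridges this with a separate bootstrap: a product/transport estimate shows that convergence of $v^\varepsilon-v_0$ in $L^\infty_T L^{r}$ for some $r=2r_0>4$ implies convergence in $L^\infty_T L^{r/2}$ (the difference $\nabla(P^\varepsilon-P_0)$ being controlled in $L^{r/2}$ by Hardy--Littlewood--Sobolev precisely as long as $r/2>2$), and iterating this $k$ times extends \eqref{eq:vconverge} from $r\in[p,\infty]$ to all $r>\max(p/2^k,2)$, hence to all $r>2$. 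A second, smaller omission: since uniqueness and continuous dependence are run at the lowered regularity $\sigma=s-1$, which lies in $(1,2)$ when $2<s<3$, the ``Kato--Ponce-type'' commutators cannot be invoked off the shelf; the paper estimates $\Re\langle[\Lambda^\sigma,v_1^\varepsilon\cdot\nabla]b^\varepsilon,\Lambda^\sigma b^\varepsilon\rangle$ with the specific splitting $q=2/(2-\sigma)$, $r=2/(\sigma-1)$ and the embeddings $L^q\hookrightarrow\dot H^{\sigma-1}$, $L^r\hookrightarrow\dot H^{2-\sigma}$, and bounds the Poisson difference by $\|\nabla^2(P_1^\varepsilon-P_2^\varepsilon)\|_{H^\sigma}\le C\|b^\varepsilon\|_{H^\sigma}$ using $L^2$-boundedness of the Riesz transform. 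Your plan is right in spirit, but these two devices are where the actual work of the paper sits.
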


\begin{proof}
Put $v^\eps:= \nabla \phi^\eps$ and consider
\begin{equation}\label{eq:SHS}
\left\{
\begin{aligned}
&\d_t a^\eps + v^\eps \cdot \nabla a^\eps + \frac12 a^\eps \nabla \cdot v^\eps
 = i\frac\eps2 \Delta a^\eps , \quad a^\eps(0,x)=A_0^\eps(x);\\
&\d_t v^\eps + (v^\eps \cdot \nabla )v^\eps + \l\nabla  P^\eps=0, \quad v^\eps(0,x)=\nabla \Phi_0(x);\\
& P^\eps (t,x)=
	-\frac{1}{2\pi}\int_{\R^2} \( \log \frac{|x-y|}{|y|} \) |a^\eps(t,y)|^2 dy, \\
&|\nabla P^\eps| \in L^\I (\R^2), \quad|\nabla P^\eps|\to0 \text{ as } |x|\to\I, \quad
P^\eps(0)=0.
\end{aligned}
\right.
\end{equation}
Let us prove that this system has a unique solution.
To show the existence, it suffices to establish an energy estimate
\begin{equation}\label{eq:energyest}
	\sup_{t\in[0,T]}E(t)\le C E(0)
\end{equation}
for some positive constants $T$ and $C$ independent of $\eps$, where
\[
	E(t):= \Sobn{a^\eps(t)}s^2 + \Lebn{v^\eps(t)}{p}^2+\Lebn{v^\eps(t)}{\I}^2
	 + \Sobn{\nabla v^\eps(t)}s^2.
\]
However, we omit the details of this part
because this part is easier than the following uniqueness part
and is essentially the same as in \cite{MaLarge}.
Then, the standard argument shows $(a^\eps,v^\eps)
\in C([0,T);H^s \times X^{s+1}\cap L^p)$ exists (see \cite{AC-SP,CM-AA}).
It follows from the first line of \eqref{eq:SHS} that $a^\eps \in C^1((0,T);H^{s-2})$.
Since $E(0)$ is bounded uniformly in $\eps$,
we see that the solution is also uniformly bounded.
The conservation of $\Lebn{a^\eps(t)}2$ is also obtained from
the first line of \eqref{eq:SHS} by a standard argument.
\smallbreak

Now, we proceed to the proof of the uniqueness of \eqref{eq:SHS}.
Assume $3>s>2$ and put $\sigma = s-1$.
Let $(a^\eps_1, \phi^\eps_1)$ and $(a^\eps_2, \phi^\eps_2)$ be two solutions
with data $(A^\eps,\Phi)$.
One sees that $(b^\eps,w^\eps):=(a_1^\eps-a_2^\eps,v_1^\eps-v_2^\eps)$ solves
\begin{equation}\label{eq:dSHS}
	\left\{
	\begin{aligned}
	& \d_t b^\eps + v_1^\eps  \cdot \nabla b^\eps + w^\eps  \cdot \nabla a_2^\eps
	+\frac12 a_1^\eps \nabla \cdot w^\eps
	+ \frac12 b^\eps \nabla \cdot v_2^\eps = \frac{i\eps}2 \Delta b^\eps, \\
	& \d_t w^\eps + v_1^\eps \cdot \nabla w^\eps + w^\eps \cdot \nabla v_2^\eps + \l (\nabla P_1^\eps - \nabla P_2^\eps) =0, \\
	& b^\eps(0)=0, \quad w^\eps(0)=0,
	\end{aligned}
	\right.
\end{equation}
where $P^\eps_1$ and $P_2^\eps$ are the Poisson terms determined from
$a_1^\eps$ and $a_2^\eps$, respectively.
We now estimate $H^{\sigma}$-norm of $b^\eps$
by the commutator estimate (see \cite{LaJFA})
\begin{align*}
	&|\Re\Jbr{\Lambda^\sigma(v_1^\eps\cdot \nabla b^\eps), \Lambda^\sigma b^\eps}| \\
	&{}\le \frac12 \Lebn{\nabla v_1^\eps}\I \Sobn{b^\eps}{\sigma}^2
	+ |\Re\Jbr{[\Lambda^\sigma,v_1^\eps\cdot \nabla] b^\eps, \Lambda^\sigma b^\eps}| \\
	&{}\le C \Lebn{\nabla v_1^\eps}\I \Sobn{b^\eps}{\sigma}^2 \\
	&\quad +C(\Lebn{\nabla v_1^\eps}\I \Sobn{\nabla b^\eps}{\sigma-1} + \norm{\nabla v_1^\eps}_{W^{\sigma,r}} \Lebn{\nabla b^\eps}{q} )\Sobn{b}{\sigma},
\end{align*}
where $q=2/(2-\sigma)$ and $r=2/(\sigma-1)$.
Notice that $L^q \hookrightarrow \dot{H}^{\sigma-1}$ and
$L^r \hookrightarrow \dot{H}^{2-\sigma}$.
Therefore the right hand side is bounded by
\[
	C(\Lebn{\nabla v_1^\eps}\I  + \Sobn{\nabla v_1^\eps}2 )\Sobn{b^\eps}{\sigma}^2.
\]
One sees that
\begin{multline*}
	|\Re\Jbr{\Lambda^\sigma(w^\eps\cdot \nabla a_2^\eps), \Lambda^\sigma b^\eps}| \\
	\le C(\Lebn{w^\eps}{\I}\Sobn{a_2^\eps}{s} + \Sobn{\nabla  w^\eps}{\sigma-1}
	\Lebn{\nabla a_2^\eps}{\I})\Sobn{b^\eps}{\sigma},
\end{multline*}
and, similarly, that
\begin{align*}
	|\Re\Jbr{\Lambda^\sigma(a_1^\eps \nabla \cdot w^\eps), \Lambda^\sigma b^\eps}|
	&{}\le C\Sobn{a_1^\eps}{\sigma}\Sobn{\nabla  w^\eps}{\sigma}\Sobn{b^\eps}{\sigma},\\
	|\Re\Jbr{\Lambda^\sigma(b^\eps \nabla \cdot v_2^\eps), \Lambda^\sigma b^\eps}|
	&{}\le C\Sobn{b^\eps}{\sigma}^2\Sobn{\nabla  v_2^\eps}{\sigma}.
\end{align*}
We summarize above estimates to end up with
\[
	\frac{d}{dt} \Sobn{b^\eps}{\sigma}^2 \le C (\Sobn{b^\eps}{\sigma}^2 + \Lebn{w}{p}^2
	+\Sobn{\nabla w^\eps}{\sigma}^2).
\]
Let us proceed to the estimate on $w$.
From the second line of \eqref{eq:dSHS},
\[
	\frac{d}{dt}|w^\eps(t,x)|\le |\d_t w^\eps(t,x)| \le  \Lebn{v_1^\eps \cdot \nabla w^\eps + w^\eps \cdot \nabla v_2^\eps + \l (\nabla P_1^\eps - \nabla P_2^\eps)}{\I},
\]
which yields
\[
	\frac{d}{dt} \Lebn{w^\eps}\I \le C(\Sobn{\nabla w^\eps}{\sigma} + \Lebn{w^\eps}{\I}
	+ \Sobn{b^\eps}\sigma).
\]
Similarly, operating $\nabla$ to the second line of \eqref{eq:dSHS}, we obtain
\[
	\d_t\nabla w^\eps +\nabla( v_1^\eps \cdot \nabla w^\eps )+ \nabla(w^\eps \cdot \nabla v_2^\eps) + \l \nabla^2( P_1^\eps - P_2^\eps) =0.
\]
In the essentially same way as in the estimate on $b^\eps$, we obtain
\begin{align*}
	&|\Jbr{\Lambda^\sigma \nabla(v_1^\eps \cdot \nabla w^\eps) ,\Lambda^\sigma \nabla w^\eps}| \\
	&{}\le C\Sobn{\nabla w^\eps}{\sigma}^2\Lebn{\nabla v_1^\eps}\I \\
	&\quad + C (\Lebn{\nabla v_1^\eps}\I \Sobn{\nabla w^\eps}{\sigma} + \Sobn{\nabla v_1^\eps}{s}\Lebn{\nabla w^\eps}\I)
\end{align*}
and
\[
	\Lebn{\Lambda^\sigma \nabla(w^\eps \cdot \nabla v_2^\eps)}{2}
	\le C (\Sobn{\nabla w^\eps}{\sigma} \Lebn{\nabla v_2^\eps}\I
	+ \Lebn{w^\eps}{\I}\Sobn{\nabla v_2^\eps}{s}).
\]
By the use of $L^2$-boundedness of the Riesz transform,
\[
	\Sobn{\nabla^2( P_1^\eps - P_2^\eps)}{\sigma} \le C\Sobn{|a_1^\eps|^2-|a_2^\eps|^2}{\sigma}
	\le C \Sobn{b^\eps}{\sigma}.
\]
Therefore,
\[
	\frac{d}{dt} \Sobn{\nabla w^\eps}{\sigma} \le C(\Sobn{\nabla w^\eps}{\sigma} + \Lebn{w^\eps}{\I}
	+ \Sobn{b^\eps}\sigma).
\]
Hence, we apply Gronwall's lemma to the inequality
\[
	\frac{d}{dt} (\Sobn{b^\eps}\sigma+ \Lebn{w^\eps}{\I}
	+ \Sobn{\nabla w^\eps}{\sigma})
	\le C(\Sobn{b^\eps}\sigma+ \Lebn{w^\eps}{\I}
	+ \Sobn{\nabla w^\eps}{\sigma})
\]
to conclude $b\equiv0$ and $ w\equiv 0$,
which shows the uniqueness.
The continuous dependence on the initial data is proven in the essentially same way.
\smallbreak

We now suppose $s>3$ and that $A_0:=\lim_{\eps\to0} A^\eps \in H^s(\R^2)$ exists,
and prove the convergence of $(a^\eps,v^\eps)$ as $\eps\to0$.
So far, we do not use the property $\eps\neq0$.
Hence, repeating the above argument, we see that the system \eqref{eq:SHS}
with $\eps=0$ has a unique solution $(a_0,v_0)$ in the same class. 
One sees that $(b^\eps,w^\eps):=(a^\eps-a_0,v^\eps-v_0)$ solves
\[
	\left\{
	\begin{aligned}
	& \d_t b^\eps + v^\eps \cdot \nabla b^\eps + w^\eps \cdot \nabla a_0
	+\frac12 a^\eps \nabla \cdot w^\eps
	+ \frac12 b^\eps \nabla \cdot v_0 = \frac{i\eps}2 \Delta b^\eps + \frac{i\eps}2 \Delta a_0, \\
	& \d_t w^\eps + v^\eps \cdot \nabla w^\eps + w^\eps \cdot \nabla v_0 + \l (\nabla P_{\eps} - \nabla P_0) =0, \\
	& b^\eps(0)=A^\eps - A_0, \quad w^\eps(0)=0,
	\end{aligned}
	\right.
\]
where $P_\eps$ and $P_0$ are the Poisson terms determined from
$a^\eps$ and $a_0$, respectively.
The estimates we used for the proof of uniqueness give us the convergence result.
The difference is just that the term $\Delta a_0$ produces two-derivative loss.
We note that
\begin{equation}\label{eq:vconverge}
	\norm{v^\eps - v_0}_{L^\I([0,T);L^r)}\to 0, \quad \text{ as } \eps \to 0
\end{equation}
is true not only for $r\ge p$ but also for all $r>2$.
Indeed, if \eqref{eq:vconverge} holds for some $r=2r_0>4$, the estimate
\begin{multline*}
	\norm{v^\eps - v_0}_{L^\I_TL^{\frac{r}2}}
	\le T\norm{v^\eps - v_0}_{L^\I_TL^{r}}\norm{\nabla v^\eps }_{L^\I_TL^{r}} \\
	+ T\norm{v_0 }_{L^\I_TL^{\I}}\norm{\nabla (v^\eps - v_0)}_{L^\I_TL^{\frac{r}2}} +  T\norm{\nabla (P^{\eps}-P_0)}_{L^\I_TL^{\frac{r}2}}
\end{multline*}
shows that \eqref{eq:vconverge} holds also for $r/2=r_0>2$,
where we write $L_T^\I L^r=L^\I([0,T);L^r)$, for short. 
Since we have already known that \eqref{eq:vconverge} holds for all $r \in [p,\I]$,
the $k$-time use of this argument proves \eqref{eq:vconverge} for
$r > \max(p/2^k,2)$.
The lower bound $r>2$ comes from the estimate on the Poisson term
(see Remark \ref{rmk:nPL2}).
\smallbreak

We construct $\phi^\eps$ as
\[
	\phi^\eps(t):= \Phi - \int_0^t \( \frac12 |v^\eps(s)|^2  + \l P^\eps(s) \)ds.
\]
Note that $\nabla \times v^\eps(t) \equiv 0$ holds at $t=0$, and so for all $t\in [0,T]$.
One verifies that $\nabla |v^\eps|^2 = 2(v^\eps \cdot \nabla) v^\eps$
and so that $(a^\eps, \nabla \phi^\eps)$ solves \eqref{eq:SHS}.
Then, $\nabla \phi^\eps=v^\eps$ by uniqueness.
Recall that $v^\eps$ decays at spatial infinity.
Hence, we deduce from \eqref{eq:log} that
\[
	\limsup_{|x|\to\I} \frac{|\phi^\eps(t,x)-\Phi(x)|}{\log|x|} \le |\l|\int_0^t \frac{\Lebn{a^\eps(s)}2^2}{2\pi} ds = \frac{t|\l|}{2\pi}\Lebn{A^\eps}2^2,
\]
where we have used the fact that $\Lebn{a^\eps(t)}2$ is conserved.
It is clear from above representation of $\phi^\eps$ to see that, for any bounded domain
$\Omega \subset \R^2$, $\phi^\eps -\Phi$ is bounded in $L^\I ([0,T]\times \Omega)$ uniformly in $\eps$.
Moreover,
\begin{align*}
	\norm{\phi^\eps-\phi_0}_{L^\I([0,T)\times \Omega)}\le{}&
	 \frac{T}2 \norm{|v^\eps|^2-|v_0|^2}_{L^\I([0,T)\times \Omega)} \\&{} + |\l|T \norm{P^\eps-P_0}_{L^\I([0,T)\times \Omega)} \\
	 \le{}& C_\Omega (\norm{v^\eps-v_0}_{L^\I([0,T)\times \Omega)}
	+ \norm{a^\eps-a_0}_{L^\I([0,T)\times \Omega)}) \\
	\to{}&0
\end{align*}
as $\eps\to0$.
\end{proof}

\section{Proof of Theorem \ref{thm:main1}}\label{sec:proof1}
Letting $\eps=1$, $A^\eps=u_0$, and $\Phi\equiv0$ in Theorem \ref{thm:base},
we see that if $u_0 \in H^s$ ($s>2$) then the system
\begin{equation}\label{eq:S1}
	\left\{
	\begin{aligned}
	& \d_t a + \nabla \phi \cdot \nabla a + \frac12 a \Delta \phi = i\frac1{2}\Delta a, \\
	& \d_t \phi + \frac12 |\nabla \phi|^2 + \l P = 0, \\
	& P(t,x) =
	-\frac{1}{2\pi}\int_{\R^2} \( \log \frac{|x-y|}{|y|} \) |a(y)|^2 dy, \\
	& a(0,x) = u_0(x), \quad \phi(0,x) = 0,
	\end{aligned}
	\right.
\end{equation}
has a unique solution $(a,\phi)$ satisfying
\begin{align*}
	a &{}\in C([0,T); H^s(\R^2)) \cap C^1((0,T);H^{s-2}(\R^2)), \\
	\phi &{}\in C^1([0,T)\times \R^2), \quad
	\nabla \phi \in L^\I ([0,T); X^{s+1} (\R^2)\cap L^{2+}(\R^2)).
\end{align*}
Moreover, $\Lebn{a}2$ is conserved, $\phi(x)=O(\log|x|)$ as $|x|\to\I$,
and the mapping $u_0\mapsto (a,\nabla \phi)$ is
continuous from $H^s$ to $C([0,T);H^{s-1}(\R^2)) \times C([0,T);X^s(\R^2)\cap L^{2+}(\R^2))$. 
We begin our discussion from this point.
Since the system \eqref{eq:SP2} is time-reversible
we only consider for positive time in what follows.
\subsection{Existence}
One easily verifies that $u=ae^{i\phi}$ solves \eqref{eq:SP2} 
 in the $L^2$ sense
because the first line and the second line of \eqref{eq:S1} are satisfied
in the $L^2$ sense and in the classical sense, respectively.
Our fist goal is to show that this $u$ belongs to $C([0,T);H^s(\R^2))$.
It immediately follows from the following lemma that $u\in L^\I([0,T);H^s(\R^2))$.
\begin{lemma}\label{lem:HsofWKB}
For any $s>1$,
\begin{equation}\label{eq:HsofWKB}
	\Sobn{ae^{i\phi}}{s} \le C \Sobn{a}{s}(1+
	\Sobn{\nabla^2 \phi}{\max(s-2,0)}) (1+ \Lebn{\nabla\phi}{\I}^{\ceil{s}}),
\end{equation}
where $\ceil{s}$ denotes the minimum integer larger than or equal to $s$.
\end{lemma}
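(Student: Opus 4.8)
The plan is to take advantage of the fact that $e^{i\phi}$ has modulus one, so that the non-decaying phase never has to be measured in an $L^2$-based norm on its own; it is always kept alongside a factor that does decay. Since $|e^{i\phi}|=1$ we have $\Lebn{ae^{i\phi}}2=\Lebn a2$, so the whole issue is the homogeneous seminorm $\hSobn{ae^{i\phi}}s$. I would write $s=m+\sigma$ with $m=\lfloor s\rfloor\ge1$ and $\sigma\in[0,1)$, treat the integer part by the Leibniz rule, and absorb the remaining fractional order $\sigma$ by a fractional Leibniz (Kato--Ponce) argument. The cases $s=1$ and $1<s<2$ are the base of this scheme, in which $\max(s-2,0)=0$ and only $\Lebn{\nabla^2\phi}2$ enters.

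For the integer part I apply $\nabla^m$ and expand by Leibniz. Each derivative hitting the phase is handled by the Fa\`a di Bruno formula, which writes $\nabla^k(e^{i\phi})$ as a sum of terms $\big(\prod_l\nabla^{\gamma_l}\phi\big)e^{i\phi}$ with $\sum_l\gamma_l=k$, each $\gamma_l\ge1$, the unit-modulus factor $e^{i\phi}$ being pulled out completely. Thus, up to this harmless phase, every summand of $\nabla^m(ae^{i\phi})$ is a product $\nabla^j a\prod_l\nabla^{\gamma_l}\phi$ with $j+\sum_l\gamma_l=m$. Since $\phi$ itself is unbounded, the key bookkeeping step is to rewrite every phase-factor as a derivative of $\nabla\phi$, i.e.\ $\nabla^{\gamma_l}\phi=\nabla^{\gamma_l-1}(\nabla\phi)$, so that the term becomes a product of $a$ and $L$ copies of $\nabla\phi$ carrying a total of $m-L$ derivatives, where $L$ is the number of phase-factors. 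A standard Moser/Gagliardo--Nirenberg tame product estimate then lets me put the single highest-order factor in $L^2$ and all the others in $L^\infty$: choosing $a$ as the top factor gives $\le C\Sobn as$, while choosing a $\nabla\phi$-factor as top gives $\le C\Lebn a\I\Sobn{\nabla^2\phi}{s-2}\le C\Sobn as\Sobn{\nabla^2\phi}{s-2}$ (using $\hSobn{\nabla\phi}{m-L}=\hSobn{\nabla^2\phi}{m-L-1}$ and $m-L-1\le s-2$). This explains the linear dependence on $\Sobn{\nabla^2\phi}{\max(s-2,0)}$ and the $1+$ that covers the pure term $\nabla^m a\,e^{i\phi}$. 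Every first-order factor $\nabla\phi$ not chosen as the top one is measured in $L^\infty$; since at most $m\le\ceil s$ such factors occur, this produces the power $\Lebn{\nabla\phi}\I^{\ceil s}$.

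For the fractional remainder $\sigma\in(0,1)$ I estimate $\hSobn{\,\cdot\,}\sigma$ of each product term $R\,e^{i\phi}$, where $R=\nabla^j a\prod_l\nabla^{\gamma_l-1}(\nabla\phi)$ is the decaying part. By the fractional Leibniz rule the operator $\Lambda^\sigma$ either falls on $R$, giving $\hSobn R\sigma\,\Lebn{e^{i\phi}}\I=\hSobn R\sigma$ (again estimated by the tame product estimate, now with total order $s-L$), or falls on the phase, giving a cross term controlled by $\Lebn R2$ times the H\"older seminorm of $e^{i\phi}$. The only genuinely new point is this last seminorm: $e^{i\phi}$ is bounded by one and globally Lipschitz with constant $\Lebn{\nabla\phi}\I$, so $\|e^{i\phi}\|_{\dot C^\sigma}\le C\Lebn{\nabla\phi}\I^{\sigma}$. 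Thus every term stays in $L^2$, and the extra factor $\Lebn{\nabla\phi}\I^{\sigma}$ together with the at most $m$ integer-order factors again totals a power $\le\ceil s$.

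The main obstacle is precisely that $e^{i\phi}$ does not decay and so belongs to no $L^2$-based space; consequently the naive Kato--Ponce bound $\hSobn{ag}s\le C(\hSobn as\Lebn g\I+\Lebn a\I\hSobn gs)$ is unavailable for $g=e^{i\phi}$. The entire argument is organised so as never to estimate $e^{i\phi}$ (or $\Lambda^s e^{i\phi}$) in an $L^2$-based norm: the phase is always either factored out with modulus one or controlled solely through its Lipschitz/H\"older seminorm, while all genuine Sobolev norms are spent on $a$ and on the derivatives $\nabla\phi,\nabla^2\phi,\dots$, which do decay or are bounded. Carrying out the tame product and fractional Leibniz estimates with the correct power-counting of $\Lebn{\nabla\phi}\I$ is the technical heart, but it is routine once the phase has been isolated.
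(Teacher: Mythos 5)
Your strategy is sound and genuinely different from the paper's. The paper proves \eqref{eq:HsofWKB} by induction in unit steps of $s$: the base case $1<s<2$ is handled with the \emph{second}-difference characterization of $\dot H^s$, bounding $\delta_y(ae^{i\phi})-2ae^{i\phi}+\delta_{-y}(ae^{i\phi})$ using only $|e^{i\phi}|=1$, the bound $\lvert\delta_y e^{i\phi}-e^{i\phi}\rvert\le\min(2,|y|\Lebn{\nabla\phi}\I)$ and a second-difference bound producing $\Lebn{\nabla^2\phi}2$; the inductive step then peels off one derivative by the Leibniz rule and a tame product estimate. You instead expand all $m=\lfloor s\rfloor$ derivatives at once by Fa\`a di Bruno and Moser calculus and treat only the fractional remainder $\sigma=s-m\in(0,1)$ by differences. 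Both arguments rest on the same structural insight (the phase is never measured in an $L^2$-based norm, only through $\Lebn{\nabla\phi}\I$ and Sobolev norms of $\nabla^2\phi$), and your power counting matches the statement. The paper's induction avoids the Fa\`a di Bruno combinatorics and multilinear Moser estimates; yours is a single-pass argument with more transparent bookkeeping.

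There is, however, one step that fails as written: the claim that the fractional Leibniz rule bounds the cross term by $\Lebn{R}{2}\,\lVert e^{i\phi}\rVert_{\dot C^\sigma}$. No such bilinear estimate holds at this endpoint: for general $f\in L^2\cap\dot H^\sigma$ and $g\in L^\I\cap\dot C^\sigma$ the product $fg$ need not belong to $\dot H^\sigma$. Indeed, in the paraproduct decomposition $fg=T_fg+T_gf+R(f,g)$, the piece $T_fg$ only satisfies $2^{j\sigma}\Lebn{\Delta_j T_fg}{2}\lesssim\Lebn{f}{2}\lVert g\rVert_{\dot B^\sigma_{\I,\I}}$ uniformly in $j$, i.e.\ it lands in $\dot B^\sigma_{2,\I}$ rather than $\dot B^\sigma_{2,2}=\dot H^\sigma$; taking $g$ a Weierstrass-type function and $f$ smooth and compactly supported gives a genuine counterexample. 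The repair uses precisely the structure you already point out, but quantitatively: since $e^{i\phi}$ is simultaneously bounded and Lipschitz, one has $\lvert e^{i\phi(x)}-e^{i\phi(y)}\rvert\le\min\bigl(2,\Lebn{\nabla\phi}\I|x-y|\bigr)$, and it is this \emph{minimum} -- not the H\"older bound $C\Lebn{\nabla\phi}\I^\sigma|x-y|^\sigma$, which makes the singular integral diverge logarithmically at both ends -- that must be inserted into the Gagliardo first-difference seminorm of $Re^{i\phi}$; splitting the integral at $|x-y|=\Lebn{\nabla\phi}\I^{-1}$ then yields the cross term $\le C\Lebn{R}{2}\Lebn{\nabla\phi}\I^{\sigma}$. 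Equivalently, boundedness plus Lipschitz continuity places $e^{i\phi}$ in $\dot B^\sigma_{\I,2}$, which is the correct endpoint space for the product estimate, with norm $\lesssim\Lebn{\nabla\phi}\I^\sigma$. This difference-with-minimum computation is exactly what the paper's own proof carries out (with second differences); with that substitution your argument closes.
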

\begin{remark}
A similar estimate can be found in \cite{WdJFA}.
The good point in our estimate is that
we do not need any bound on $\phi$ itself.
\end{remark}
\begin{proof}
We first consider the case $1<s<2$.
Note that
\[
	\Sobn{ae^{i\phi}}{s}
	\sim \Lebn{ae^{i\phi}}{2} + \hSobn{ae^{i\phi}}{s}.
\]
The first term of the right hand side is nothing but $\norm{a}_{L^2}$.
For $1<s<2$,
\begin{multline*}
	\hSobn{ae^{i\phi}}{s} \\ 
	\sim\( \int_0^\I \( t^{-s} \sup_{|y|\le t} \norm{\delta_y(ae^{i\phi}) - 2ae^{i\phi} +\delta_{-y}(ae^{i\phi})}_{L^2(\R^2)} \)^2 \frac{dt}{t} \)^{\frac12}
\end{multline*}
is well known, where $\delta_y$ denotes the shift operator, $(\delta_y f)(x):=f(x+y)$
(see \cite[Theorem 6.3.1]{BLBook}).
An elementary calculation shows
\begin{align*}
	&\Lebn{\delta_y(ae^{i\phi}) - 2ae^{i\phi} +\delta_{-y}(ae^{i\phi})}2 \\
	&{} \le 
	\Lebn{(\delta_y a - 2a +\delta_{-y}a)\delta_y e^{i\phi}}2
	+ \Lebn{(a - \delta_{-y}a)(\delta_y e^{i\phi}-\delta_{-y} e^{i\phi})}2 \\
	&\quad {} + \Lebn{a(\delta_y e^{i\phi}- 2e^{i\phi} +\delta_{-y}e^{i\phi})}2.
\end{align*}
The first two terms in the right hand side are estimated as
\begin{align*}
	\Lebn{(\delta_y a - 2a +\delta_{-y}a)\delta_y e^{i\phi}}2 \le \Lebn{\delta_y a - 2a +\delta_{-y}a}2
\end{align*}
and
\[
	\Lebn{(a - \delta_{-y}a)(\delta_y e^{i\phi}-\delta_{-y} e^{i\phi})}2 
	\le \Lebn{a - \delta_{-y}a}{2} \min (2, 2|y| \Lebn{\nabla \phi}{\I}) ,
\]
respectively. The third satisfies
\begin{multline*}
	\Lebn{a(\delta_y e^{i\phi}- 2e^{i\phi} +\delta_{-y}e^{i\phi})}2 \\
	\le 
	\min(4 \Lebn{a}{2}, 
	2|y|^2  \Lebn{\nabla \phi}{\I}^2 \Lebn{a}{2} + 2|y|^2 \Lebn{\nabla^2 \phi}{2}  \Lebn{a}{\I}).
\end{multline*}
Combining all these estimates, we conclude that
\[
	\Sobn{ae^{i\phi}}{s} \le
	C \Sobn{a}s (1+ \Lebn{\nabla^2 \phi}{2})(1+ \Lebn{\nabla \phi}{\I}^2),
\]
which proves \eqref{eq:HsofWKB} for $1<s<2$.

If $s=2$ then \eqref{eq:HsofWKB} is obvious by the H\"older inequality.

Let us proceed the case $s>2$. We prove by induction.
Take some integer $k\ge1$ and assume that
\eqref{eq:HsofWKB} is true for $k<s \le k+1$. Then,
\[
	\Sobn{ae^{i\phi}}{s+1}
	\sim \Lebn{ae^{i\phi}}{2} + \Lebn{|\nabla|^{s+1}ae^{i\phi}}{2}
\]
holds and the tame estimate gives us
\begin{align*}
	\Lebn{|\nabla|^{s+1}ae^{i\phi}}{2}
	\le{}& C \Lebn{|\nabla|^{s}(\nabla(ae^{i\phi}))}{2} \\
	\le{}& C \(\Sobn{(\nabla a)e^{i\phi}}{s}
	+ \Sobn{a e^{i\phi}\nabla \phi}{s}\) \\
	\le{}&C \Sobn{(\nabla a)e^{i\phi}}{s} 
	+ C\Sobn{a e^{i\phi}}{s} \Lebn{\nabla \phi}{\I}\\
	&{}+ C\Lebn{a e^{i\phi}}{\I}\Sobn{\nabla^2 \phi}{s-1}.
\end{align*}
By assumption of the induction,  we obtain
\[
	\Sobn{(\nabla a)e^{i\phi}}{s}
	\le C\Sobn{\nabla a}{s}(1+
	\Sobn{\nabla^2 \phi}{\max(s-2,0)}) (1+ \Lebn{\nabla\phi}{\I}^{\ceil{s}})
\]
and
\begin{multline*}
	\Sobn{a e^{i\phi}}{s} \Lebn{\nabla \phi}{\I}\\
	\le C\Sobn{a}{s}(1+
	\Sobn{\nabla^2 \phi}{\max(s-2,0)}) (1+ \Lebn{\nabla\phi}{\I}^{\ceil{s}}) \Lebn{\nabla \phi}{\I}.
\end{multline*}
Since $s>k\ge 1$, the Sobolev embedding $H^s(\R^2)
\hookrightarrow L^\I(\R^2)$ implies
\[
	\Lebn{a e^{i\phi}}{\I}\Sobn{\nabla^2 \phi}{s-1}
	\le C \Sobn{a}{s} \Sobn{\nabla^2 \phi}{s-1}.
\]
Together with these estimates, we conclude that
\[
	\Sobn{ae^{i\phi}}{s+1}\le C \Sobn{a}{s+1}(1+
	\Sobn{\nabla^2 \phi}{s-1}) (1+ \Lebn{\nabla\phi}{\I}^{\ceil{s}+1}),
\]
which shows that \eqref{eq:HsofWKB} is true for $k+1<s+1\le k+2$.
\end{proof}
\begin{remark}
The following estimate can be established in the same way; for $0<s<1$ and $1\le p,q \le \I$,
\[
	\norm{ae^{i\phi}}_{B^s_{p,q}(\R^2)} \le C\( \norm{a}_{B^s_{p,q}(\R^2)} +
	\norm{\nabla \phi}_{L^\I(\R^2)} \norm{a}_{L^p(\R^2)}\),
\]
where $B^s_{p,q}(\R^2)$ denotes the Besov space.
\end{remark}

\subsection{Continuity}
The following lemma confirms that $u$ is continuous in time
as $H^s(\R^2)$-valued function.
\begin{lemma}\label{lem:ContHsofWKB}
Let $s>0$. 
Assume $\Sobn{a}{s}$ is bounded.
For any $\eps>0$ there exists $\delta>0$ such that
if $\Lebn{\nabla\phi}{\I} + \Sobn{\nabla^2\phi}{\max(s-2,0)}+|\phi(0)| < \delta$ then
$\Sobn{a (e^{i\phi}-1)}{s} \le \eps$.
\end{lemma}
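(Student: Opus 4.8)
The plan is to localise in space so as to separate two competing effects: near the origin the phase $\phi$ is \emph{pointwise} small, while at spatial infinity $\phi$ need not be small but the fixed profile $a$ decays. Fix a cut-off $\chi\in C_0^\I(\R^2)$ with $\chi\equiv1$ on $\{|x|\le1\}$ and $\mathrm{supp}\,\chi\subset\{|x|\le2\}$, set $\chi_R(x):=\chi(x/R)$, and split
\[
	a(e^{i\phi}-1)=(1-\chi_R)a\,(e^{i\phi}-1)+\chi_R a\,(e^{i\phi}-1).
\]
We may assume $\delta\le1$, so that $\Lebn{\nabla\phi}{\I}\le1$ and $\Sobn{\nabla^2\phi}{\max(s-2,0)}\le1$. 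Under this normalisation I first record the uniform multiplier bound
\[
	\Sobn{fe^{i\theta}}{s}\le C\Sobn{f}{s}
\]
valid for \emph{every} real phase $\theta$ with $\Lebn{\nabla\theta}{\I}\le1$ and $\Sobn{\nabla^2\theta}{\max(s-2,0)}\le1$: for $s>1$ this is Lemma \ref{lem:HsofWKB} with the two parenthetical factors absorbed into $C$, and for $0<s\le1$ it follows by complex interpolation of the map $f\mapsto fe^{i\theta}$ between its isometric action on $L^2(\R^2)$ and its (uniformly) bounded action on $H^1(\R^2)$.

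For the tail piece I apply this bound with $\theta=\phi$:
\[
	\Sobn{(1-\chi_R)a\,(e^{i\phi}-1)}{s}\le\Sobn{\big((1-\chi_R)a\big)e^{i\phi}}{s}+\Sobn{(1-\chi_R)a}{s}\le(C+1)\Sobn{(1-\chi_R)a}{s}.
\]
Since $a\in H^s(\R^2)$ is fixed, $\Sobn{(1-\chi_R)a}{s}\to0$ as $R\to\I$; I choose and \emph{fix} $R$ so that this term is $\le\eps/2$.

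For the compactly supported piece I exploit the pointwise smallness of $\phi$ on $B_{3R}$. From $\phi(x)=\phi(0)+\int_0^1\nabla\phi(tx)\cdot x\,dt$ and the hypothesis one gets $|\phi|\le(1+3R)\delta$ on $B_{3R}$. Fix $\eta\in C_0^\I(\R^2)$ with $\eta\equiv1$ on $B_{2R}$ and $\mathrm{supp}\,\eta\subset B_{3R}$, and set $\psi:=\eta\phi$, so that $\psi=\phi$ on $\mathrm{supp}\,\chi_R$ and every Sobolev norm of $\psi$ on this now-fixed region scales like $\delta$; in particular $\Lebn{\psi}{\I}+\Lebn{\nabla\psi}{\I}\le C_R\delta$, and $\Sobn{\psi}{s}\le C_R\delta$ when $s>1$. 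I then use the homotopy identity
\[
	\chi_R a\,(e^{i\phi}-1)=i\int_0^1(\psi\,\chi_R a)\,e^{i\tau\phi}\,d\tau,
\]
which holds because $\frac{d}{d\tau}e^{i\tau\phi}=i\phi e^{i\tau\phi}$ and $\phi=\psi$ on $\mathrm{supp}\,\chi_R$. By Minkowski's integral inequality and the uniform multiplier bound (with $\theta=\tau\phi$, $\tau\le1$),
\[
	\Sobn{\chi_R a\,(e^{i\phi}-1)}{s}\le\int_0^1\Sobn{(\psi\,\chi_R a)e^{i\tau\phi}}{s}\,d\tau\le C\Sobn{\psi\,\chi_R a}{s}.
\]
A multiplier estimate for the compactly supported factor $\psi$—namely $\Sobn{\psi f}{s}\le C(\Lebn{\psi}{\I}+\Lebn{\nabla\psi}{\I})\Sobn{f}{s}$ for $0<s\le1$ (interpolating the cases $s=0,1$), and the algebra bound $\Sobn{\psi f}{s}\le C\Sobn{\psi}{s}\Sobn{f}{s}$ for $s>1$—then yields $\Sobn{\psi\,\chi_R a}{s}\le C_R\delta\Sobn{a}{s}$. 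With $R$ already fixed, taking $\delta$ small makes this piece $\le\eps/2$ as well.

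The heart of the matter, and the reason the hypotheses carry no bound on $\phi$ itself, is that $e^{i\phi}-1$ is genuinely of size $O(1)$ at spatial infinity, so no global smallness of the multiplier is available; the argument works only because the quantifiers are ordered correctly, the radius $R$ being chosen first from the decay of the fixed $a$ alone and $\delta$ only afterwards, in an $R$-dependent way, to make $\phi$ pointwise small on the fixed ball. I expect the sole delicate bookkeeping to be the verification that all Sobolev norms of $\psi=\eta\phi$ entering the compact estimate are $O_R(\delta)$: this is where one uses that $\nabla\phi\in L^\I(\R^2)$ and $\nabla^2\phi\in H^{\max(s-2,0)}(\R^2)$ (hence $\nabla\phi\in H^{\max(s-1,1)}_{\mathrm{loc}}(\R^2)$ with small norm), together with the pointwise control of $\phi$ from $|\phi(0)|$ and $\Lebn{\nabla\phi}{\I}$. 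Note finally that the resulting $\delta$ depends on $a$ through its tail, not merely through $\Sobn{a}{s}$—precisely what the continuity argument requires, where the lemma is applied with $a$ replaced by the single fixed profile $a(t_0)e^{i\phi(t_0)}$.
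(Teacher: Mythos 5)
Your proof is correct, but it takes a genuinely different route from the paper's. The paper proves the lemma by hand: for $0<s<2$ it invokes the second-difference characterization of $\dot{H}^s$, splits the second difference of $a(e^{i\phi}-1)$ into three terms, treats the term carrying both differences of $a$ by the same far/near splitting you use (a cutoff $\widetilde{a}_R=a\,\eta(\cdot/R)$ for the region $|x|\ge R$, and the pointwise smallness $|e^{i\phi}-1|\le R\Lebn{\nabla\phi}{\I}+2\abs{\sin\frac{\phi(0)}{2}}$ on $|x|\le R$), and then extends to $s>2$ by induction, essentially re-running the computations behind Lemma \ref{lem:HsofWKB}. You instead reuse Lemma \ref{lem:HsofWKB} as a black box, in the form of a uniform multiplier bound $\Sobn{fe^{i\theta}}{s}\le C\Sobn{f}{s}$ under the normalization $\delta\le 1$ (extended to $0<s\le 1$ by interpolation between $L^2$ and $H^1$, which is legitimate since $f\mapsto fe^{i\theta}$ is linear for each fixed $\theta$), and you convert $e^{i\phi}-1$ on the fixed ball into multiplication by the localized small function $\eta\phi$ via the homotopy identity $e^{i\phi}-1=i\phi\int_0^1 e^{i\tau\phi}\,d\tau$, using that $\eta\phi=\phi$ on $\mathrm{supp}\,\chi_R$ while keeping the \emph{global} phase $\tau\phi$ inside the exponential so the multiplier bound applies. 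This avoids both the difference-quotient computation and the induction and handles all $s>0$ in one pass; the price is the deferred bookkeeping $\Sobn{\eta\phi}{s}\le C_R\delta$, which you correctly flag and which does go through (Leibniz plus boundedness of multiplication by $C_0^\I$ functions on $H^\sigma$, $\sigma\ge0$, reduces it to $\norm{\phi}_{L^\I(B_{3R})}$, $\Lebn{\nabla\phi}{\I}$ and $\Sobn{\nabla^2\phi}{\max(s-2,0)}$, all of size $O_R(\delta)$). The essential common idea --- the quantifier order, with $R$ chosen first from the tail of the fixed $a$ and only then $\delta=\delta(\eps,R)$ --- is identical to the paper's, and your closing observation that $\delta$ depends on $a$ through its tail rather than through $\Sobn{a}{s}$ alone matches the paper's own proof (there too $R$ is chosen so that $C\Sobn{\widetilde{a}_R}{s}<\eps/3$, which uses the specific profile, not just its norm); both arguments therefore prove the fixed-profile version of the statement, which is what the continuity argument actually requires.
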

Indeed, an elementary calculation shows that
\begin{align*}
	\Sobn{a_1 e^{i\phi_1} -a_2 e^{i\phi_2}}{s}
	\le{}& \Sobn{(a_1-a_2)e^{i\phi_1}}{s}
	+ \Sobn{a_2e^{i\phi_2}(e^{i(\phi_1-\phi_2)}-1)}{s}.
\end{align*}
We now fix $t\in (0,T)$ and take $(a_1,\phi_1)=(a(t+h),\phi(t+h))$
and $(a_2,\phi_2)=(a(t),\phi(t))$.
Then, as $h\to0$, the first term tends to zero
because of the previous Lemma \ref{lem:HsofWKB},
and so does the second term because of this lemma.
Namely, we obtain the desired continuity.
The continuous dependence of $u$ on the data $u_0$ also follows
from that of $(a,\phi)$ by the same argument with a slight modification.

\begin{proof}[Proof of Lemma \ref{lem:ContHsofWKB}]
We first consider the case where $s<2$.
For simplicity, we denote $\psi(x)=e^{i\phi(x)}-1$.
Recall that $\Sobn{f}{s} \sim \Lebn{f}{2} + \hSobn{f}{s}$.
An elementary calculation provides
\begin{equation}\label{eq:L2apsi}
\begin{aligned}
	\Lebn{a \psi}{2} 
	\le{}& 2\norm{a}_{L^2(|x|\ge R)} \\
	&{} + \norm{a}_{L^2(|x|\le R)}
	 \(\sup_{|x|\le R}(e^{i\phi(x)}-e^{i\phi(0)}) +(e^{i\phi(0)}-1)\)\\
	\le{}&
	2\norm{a}_{L^2(|x|\ge R)}+ 
	\Lebn{a}{2} \(R \Lebn{\nabla\phi}\I +2\abs{\sin\frac{\phi(0)}{2}}\).
\end{aligned}
\end{equation}
The first term of the right hand side is small if $R$ is large.
Moreover, for any fixed (large) $R$, the second term is small if $\delta$ is sufficiently small.

We next estimate $\dot{H}^s$ norm of $a (e^{i\phi}-1)$.
Recall that, for $0<s<2$, 
\[
	\hSobn{a\psi}{s} 
	\sim\( \int_0^\I \( t^{-s} \sup_{|y|\le t} \norm{\delta_y(a\psi) - 2a\psi +\delta_{-y}(a\psi)}_{L^2(\R^2)} \)^2 \frac{dt}{t} \)^{\frac12},
\]
where $\delta_y$ is the shift operator, $(\delta_y f)(x):=f(x+y)$.
One easily verifies that
\begin{multline*}
	\int_1^\I \( t^{-s} \sup_{|y|\le t} \norm{\delta_y(a\psi) - 2a\psi +\delta_{-y}(a\psi)}_{L^2(\R^2)} \)^2 \frac{dt}{t} \\
	\le (4\norm{a\psi}_{L^2(\R^2)})^2 \int_1^\I t^{-1-2s}dt
	= \frac{8}{s} \norm{a\psi}_{L^2(\R^2)}^2.
\end{multline*}
We now consider the case $t\le 1$. A computation shows that
\begin{align*}
	\delta_y(a\psi) - 2a\psi +\delta_{-y}(a\psi)
	={}&(\delta_ya - 2a +\delta_{-y}a)\psi + (\delta_ya - \delta_{-y}a)
	(\psi -\delta_{-y}\psi) \\
	&{}+ \delta_y a [\delta_y\psi - 2\psi +\delta_{-y}\psi].
\end{align*}
The second term and the third term of the right hand side are estimated as
\begin{align*}
	\norm{(\delta_ya - \delta_{-y}a) (\psi -\delta_{-y}\psi)}_{L^2} \le 
	2|y|  \norm{\delta_ya - \delta_{-y}a}_{L^2} \Lebn{\nabla \phi}{\I}, \\
	\norm{\delta_y a [\delta_y\psi - 2\psi +\delta_{-y}\psi]}_{L^2}
	\le |y|^2 (\Lebn{a}2 \Lebn{\nabla\phi}\I^2 + 
	\Lebn{a}\I  \Lebn{\nabla^2 \phi}2),
\end{align*}
respectively. 
We next estimate the first term.
For $R \gg 1$, we have
\begin{align*}
	\Lebn{(\delta_ya - 2a +\delta_{-y}a)\psi }2
	\le{}& 2 \norm{\delta_ya - 2a +\delta_{-y}a}_{L^2(|x|\ge R)} \\
	&{}+ 4\((R+1)\Lebn{\nabla \phi}\I + 2\abs{\sin \frac{\phi(0)}2}\)\norm{a}_{L^2}.
\end{align*}
Let $\eta(x) \in C^\I(\R^n)$ be a function such that $0\le \eta \le 1$,
$\eta(x)=1$ for $|x|\ge1/2$, and $\eta(x)=0$ for $|x|\le 1/4$.
We put $\widetilde{a}_R(x)=a(x)\eta(x/R)$.
Then,
\[
	\norm{\delta_ya - 2a +\delta_{-y}a}_{L^2(|x|\ge R)}
	\le \norm{\delta_y\widetilde{a}_R - 2\widetilde{a}_R +\delta_{-y}\widetilde{a}_R}_{L^2(\R^n)}.
\]
Therefore, we conclude that
\begin{multline}\label{eq:dHsapsi}
	\int_0^1 \( t^{-s} \sup_{|y|\le t} \norm{\delta_y(a\psi) - 2a\psi +\delta_{-y}(a\psi)}_{L^2(\R^2)} \)^2 \frac{dt}{t} \\
	\le C\(\hSobn{\widetilde{a}_R}{s} + \( R\Lebn{\nabla \phi}\I + \abs{\sin \frac{\phi(0)}2}\)
	\norm{a}_{L^2}\) 
	\\+C\Lebn{\nabla \phi}\I  \hSobn{a}{s-1} + C(\Lebn{\nabla \phi}\I^2 \Lebn{a}2
	+\Lebn{\nabla^2 \phi}{2} \Lebn{a}\I).
\end{multline}
Plugging \eqref{eq:L2apsi} and \eqref{eq:dHsapsi}, we obtain
\begin{align*}
	\Sobn{a\psi}{s} \le {}&
	C\Sobn{\widetilde{a}_R}{s} +C \abs{\sin \frac{\phi(0)}2}
	\norm{a}_{L^2}\\ 
	&{} + C \Sobn{a}{s}( R\Lebn{\nabla \phi}\I
	+ \Lebn{\nabla \phi}\I^2
	+\Lebn{\nabla^2 \phi}{2}).
\end{align*}
For any $\eps>0$, we can choose $R$ so large that the first term of the 
right hand side is less than $\eps/3$.
Then, we can choose $\delta=\delta(\eps,R)$ such that
both the second term and the third term are less than $\eps/3$ if
$\Lebn{\nabla\phi}{\I} + \Lebn{\nabla^2\phi}2+|\phi(0)| < \delta$.

The case $s=2$ follows by direct calculations.

We show the case $s>2$ by induction.
We take positive integer $k$ and assume that the result is true for $k <s \le k+1$.
Then, we have
\begin{align*}
	\Sobn{a \psi}{s+1} \le{}& C  \Lebn{a \psi}{2}
	+ C \Sobn{\nabla (a\psi)}{s} \\
	\le {}& C  \Lebn{a \psi}{2}
	+  C \Sobn{(\nabla a)\psi}{s} + C \Sobn{ae^{i\phi}\nabla\phi}{s}.
\end{align*}
By \eqref{eq:L2apsi} and the assumption of the induction,
the first two terms of the right hand side are less than $\eps/3$ if $\delta $ is sufficiently small.
Now, since
\[
	\Sobn{ae^{i\phi}\nabla\phi}{s}
	\le C(\Lebn{\nabla\phi}\I \Sobn{ae^{i\phi}}{s} +
	\Sobn{\nabla^2 \phi}{s-1} \Lebn{ae^{i\phi}}\I ),
\]
the third term is also less than $\eps/3$ if $\delta$ is sufficiently small,
which completes the proof.
\end{proof}

\subsection{Uniqueness}
To complete the proof of Theorem \ref{thm:main1},
we show that the solution $u$ is unique.
It is important to note that the uniqueness of the system \eqref{eq:S1} does
not  directly means that of \eqref{eq:SP2}.
Namely, it implies no more than that the solution of \eqref{eq:SP2} which is written as
$u=ae^{i\phi}$ with a solution $(a,\phi)$ of \eqref{eq:S1}, is unique.
Then, what to show is that all solution of \eqref{eq:SP2} is written as
$u=ae^{i\phi}$ with a solution $(a,\phi)$ of \eqref{eq:S1}.
\begin{lemma}\label{lem:unique}
Let $s>2$ and define
\begin{align*}
	&A:=C([0,T);H^s(\R^2)) \cap C^1((0,T);H^{s-2}_{\mathrm{loc}}(\R^2))\\
	&B:=\left\{ \phi \in C^1([0,T)\times \R^2) ;
	\nabla \phi \in X^{s+1}(\R^2) \cap L^{2+}(\R^2) \right\}.
\end{align*}
Then, the following two statements are equivalent:
\begin{enumerate}
\item The system \eqref{eq:S1} has a unique solution $(a,\phi) \in A\times B$.
\item The system
\begin{equation}\label{eq:SP-HJ}
	\left\{
	\begin{aligned}
	&i \d_t u + \frac12 \Delta u = \l P u, \\
	&P =  -\frac{1}{2\pi}\int_{\R^2} \( \log \frac{|x-y|}{|y|} \) |u(y)|^2 dy , \\
	&\d_t \psi + \frac12 |\nabla \psi|^2 + \l P = 0, \\
	&u(0,x) = u_0(x), \quad \psi(0,x)=0
	\end{aligned}
	\right.
\end{equation}
has a unique solution $(u,\psi)\in A\times B$.
\end{enumerate}
\end{lemma}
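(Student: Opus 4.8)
The plan is to show that the modified Madelung transform $(a,\phi)\mapsto(ae^{i\phi},\phi)$ is a bijection from the set of solutions of \eqref{eq:S1} lying in $A\times B$ onto the set of solutions of \eqref{eq:SP-HJ} lying in $A\times B$, one that respects the initial constraints $a(0)=u(0)=u_0$ and $\phi(0)=\psi(0)=0$. Its inverse is $(u,\psi)\mapsto(ue^{-i\psi},\psi)$. Once such a bijection is established, the two solution sets are identified, so one is a singleton exactly when the other is, and the equivalence of (1) and (2) follows immediately. The structural fact driving everything is that this correspondence leaves the phase untouched ($\phi=\psi$) and changes the amplitude only by the unimodular factor $e^{\pm i\phi}$; in particular $|u|^2=|a|^2$, so the Poisson term $P$ computed from $u$ in \eqref{eq:SP-HJ} coincides with the one computed from $a$ in \eqref{eq:S1}. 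This is what makes the two systems interchangeable.

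First I would check that the transform sends solutions to solutions in both directions. The forward direction is already recorded in Section \ref{sec:proof1}: if $(a,\phi)$ solves \eqref{eq:S1}, then $u=ae^{i\phi}$ solves the Schr\"odinger--Poisson equation, and setting $\psi=\phi$ the Hamilton--Jacobi equation is just the second line of \eqref{eq:S1}. The reverse direction is the only genuine computation: assuming $u$ solves $i\d_t u+\frac12\Delta u=\l Pu$ and $\psi$ solves $\d_t\psi+\frac12|\nabla\psi|^2+\l P=0$, I would substitute $u=ae^{i\psi}$ and expand $i\d_t u+\frac12\Delta u$. Using the Hamilton--Jacobi equation to replace $\d_t\psi$, the term $\l Pa$ and the contribution $\frac12 a|\nabla\psi|^2$ cancel exactly, leaving $i\d_t a+\frac12\Delta a+i\nabla\psi\cdot\nabla a+\frac i2 a\Delta\psi=0$, which is precisely the transport equation in \eqref{eq:S1} with $\phi=\psi$. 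Combined with $\phi(0)=\psi(0)=0$ and $a(0)=u(0)e^{-i\psi(0)}=u_0$, this shows $(ue^{-i\psi},\psi)$ solves \eqref{eq:S1}.

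Next I would verify that the transform preserves the function-space classes, so that it maps $A\times B$ into $A\times B$ both ways. The phase component is unchanged, so $\phi\in B\Leftrightarrow\psi\in B$ is automatic. For the amplitude, I would apply Lemma \ref{lem:HsofWKB} (with $\phi$, respectively $-\psi$) for the uniform $H^s$ bound; crucially this uses only $\nabla\phi\in X^{s+1}$ and never a bound on $\phi$ itself, which is essential since $\phi=O(\log|x|)$. Lemma \ref{lem:ContHsofWKB} then yields continuity in time, so that $ae^{i\phi}$ and $ue^{-i\psi}$ lie in $C([0,T);H^s)$. The remaining $C^1((0,T);H^{s-2}_{\mathrm{loc}})$ regularity I would read off from the respective evolution equations, the local-in-space nature being forced only by the growth of $P$.

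The step requiring the most care is the reverse PDE computation together with the space bookkeeping, but neither is truly hard: the cancellation is algebraic and exact, and the space preservation is exactly what Lemmas \ref{lem:HsofWKB} and \ref{lem:ContHsofWKB} were built to supply. The one point I expect to state carefully is the sense in which the identities hold — the Schr\"odinger and transport equations in $L^2$ (equivalently $H^{s-2}_{\mathrm{loc}}$), the Hamilton--Jacobi equation pointwise. Since $\phi=\psi$ is common to both systems and $|u|^2=|a|^2$ makes $P$ common as well, the two maps are mutually inverse, and the equivalence of the uniqueness statements drops out.
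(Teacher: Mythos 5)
Your proposal is correct and takes essentially the same route as the paper: the paper likewise defines the mutually inverse maps $f:(a,\phi)\mapsto(ae^{i\phi},\phi)$ and $g:(u,\psi)\mapsto(ue^{-i\psi},\psi)$, invokes Lemma \ref{lem:HsofWKB} to show they preserve $A\times B$, and deduces the equivalence from bijectivity together with the gauge invariance $|u|^2=|a|^2$ that makes the Poisson term and hence the phase common to both systems. The only difference is one of explicitness: you carry out the reverse substitution computation and cite Lemma \ref{lem:ContHsofWKB} for time continuity, steps the paper compresses into ``it is easy to verify.''
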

By means of this lemma,
the uniqueness of \eqref{eq:SP2} is shown in the following way.
Set $A,B$ as in Lemma \ref{lem:unique}.
Let $u_1 ,u_2 \in A$ be two solutions of \eqref{eq:SP2}.
Then, we can solve the Hamilton-Jacobi equation
\[
	\d_t \psi + \frac12 |\nabla \psi|^2 + \l P = 0, \quad
	-\Delta P = |u|^2, \quad \psi(0,x)=0,
\]
in a way similar to the proof of Theorem \ref{thm:base}
and obtain $\psi_1 , \psi_2\in B$, respectively.
Note that $u$ is nothing but a source when we solve this equation.
Then, this lemma implies the solution of \eqref{eq:SP-HJ} is unique;
$(u_1,\psi_1)=(u_2,\psi_2)$.
In particular, $u_1=u_2$.
\begin{proof}[Proof of Lemma \ref{lem:unique}]
At first, we define mappings $f$ and $g$ by
\begin{align*}
	f:{}&A\times B \ni (a, \phi) \mapsto (ae^{i\phi}, \phi), \\
	g:{}&A\times B \ni (u, \psi) \mapsto (ue^{-i\psi}, \psi).
\end{align*}
By means of \eqref{eq:HsofWKB}, we see that the images of $f$ and $g$ are both 
subspaces of $A\times B$.
It is easy to verify that $f$ and $g$ are injective, and that $f \circ g = g\circ f = \mathrm{Id}$.
Therefore, both $f$ and $g$ are bijection from $A\times B$ to itself and $f^{-1}=g$.

Assume that $(a,\phi) \in A\times B$ is a unique solution of \eqref{eq:S1}.
Then, $(u,\psi)=f(a,\phi)$ solves \eqref{eq:SP-HJ}, and
this solution is unique since $f$ is bijective.
In the same way, if $(u,\psi)$ is a unique solution of \eqref{eq:SP-HJ},
then $(a,\phi)=f^{-1}(u,\psi)$ is a unique solution of \eqref{eq:S1}.
\end{proof}
\begin{remark}
In Lemma \ref{lem:unique}, the gauge invariance of the nonlinearity $Pu$,
that is, the property that $P$ depends only on the modulus $|u|$ and is independent of
the argument $u/|u|$, is fully employed.
By this property, it turns out that solutions of two Hamilton-Jacobi equations
in \eqref{eq:S1} and in \eqref{eq:SP-HJ} are identical.
\end{remark}

\section{Proof of Theorem \ref{thm:main2}}\label{sec:proof2}
We see in Theorem \ref{thm:base} that the system \eqref{eq:S} has 
a (unique) solution $(a^\eps, \phi^\eps)$ and that it converges to
$(a_0,\phi_0)$ solving \eqref{eq:S0} if $A_0=\lim_{\eps\to0} A^\eps$ exists.
Then, one verifies that
$(b^\eps,\psi^\eps)=((a^\eps-a_0)/\eps,(\phi^\eps-\phi_0)/\eps)$
solves a system similar to \eqref{eq:S}.
Thus, mimicking the proof of Theorem \ref{thm:base},
we can prove that $(b^\eps,\psi^\eps)$ exists and uniformly bounded if 
\[
	b^\eps(0) = \frac{A^\eps - A_0}{\eps}
\]
is uniformly bounded.
As a result, we obtain the following. For the details of the proof,
consult \cite{CM-AA,Grenier98}.
\begin{assumption}\label{asmp:expansion}
Let $N_0 \ge 1$ and assume the following for some $s>3+2N_0$:
\begin{itemize}
\item $A^\eps \in H^s(\R^2)$  and $\norm{A^\eps}_{H^{s}(\R^2)}$
is uniformly bounded.
Moreover, $A^\eps$ is expanded as
\[
	A^\eps = A_0 + \eps A_1 + \cdots + \eps^{N_0} A_{N_0} + o(\eps^{N_0})
	\IN H^s(\R^2).
\]
\item $\Phi \in C^1([0,T)\times \R^2)$ with $\nabla \Phi \in X^{s+1}(\R^2)\cap L^p
(\R^2)$ for some $p\in (2,\I)$.
\end{itemize}
\end{assumption}
\begin{proposition}\label{prop:expansion}
Let Assumption \ref{asmp:expansion} be satisfied.
Then, the unique solution $(a^\eps,\phi^\eps)$ of \eqref{eq:S} 
given by Theorem \ref{thm:base} has the following expansion:
\begin{align*}
	a^\eps &{}= a_0 + \eps a_1 + \cdots + \eps^{N_0} a_{N_0} + o(\eps^{N_0}), 
	\IN L^\I([0,T);H^{s-2N_0})\\
	\phi^\eps &{}= \phi_0 + \eps \phi_1 + \cdots + \eps^{N_0} \phi_{N_0} + o(\eps^{N_0}), \IN L^\I([0,T);L^\I_{\mathrm{loc}}), \\
	\nabla \phi^\eps &{}= \nabla\phi_0 + \cdots + \eps^{N_0}
	\nabla\phi_{N_0} + o(\eps^{N_0}), \IN L^\I([0,T);X^{s+1-2N_0}\cap L^{2+}),
\end{align*}
where, for all $j\in [0,N_0]$, $a_j\in C([0,T);H^{s-2j})$ and $\phi_j \in C^1([0,T)\times \R^2)$ with $\nabla \phi_j \in X^{s+1-2j}\cap L^{2+}$.
\end{proposition}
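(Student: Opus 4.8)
The plan is to argue by induction on the expansion level $N_0$, peeling off one profile at a time and reusing the energy-estimate machinery of Theorem \ref{thm:base}. The base case $N_0=1$ is essentially the $\eps\to0$ convergence already contained in Theorem \ref{thm:base}, where $(a^\eps-a_0,\nabla\phi^\eps-\nabla\phi_0)$ was shown to tend to $0$; the present refinement only asks to identify the leading rate. For the inductive step I would peel off the principal profile by setting
\[
	b^\eps := \frac{a^\eps - a_0}{\eps}, \qquad \psi^\eps := \frac{\phi^\eps - \phi_0}{\eps}, \qquad w^\eps := \nabla\psi^\eps,
\]
and derive the system they satisfy by subtracting \eqref{eq:S0} from \eqref{eq:S} in the $v=\nabla\phi$ formulation \eqref{eq:SHS} and dividing by $\eps$.

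First I would carry out this subtraction. Writing $\nabla\phi^\eps=\nabla\phi_0+\eps w^\eps$ and $a^\eps=a_0+\eps b^\eps$, the quadratic and product nonlinearities linearize exactly, and one obtains for $(b^\eps,w^\eps)$ a system of the same structural type as \eqref{eq:SHS}: a symmetric hyperbolic transport part whose coefficients are built from the already-controlled quantities $a^\eps,\nabla\phi^\eps,a_0,\nabla\phi_0$, the semiclassical dispersion $\tfrac{i\eps}2\Delta b^\eps$, a nonlocal Poisson coupling $Q^\eps$ that now depends \emph{linearly} on $b^\eps$ through the density $2\Re(\bar a_0 b^\eps)+\eps|b^\eps|^2$, and -- crucially -- a forcing term $\tfrac{i}2\Delta a_0$ coming from the dispersive term of the original equation. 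This forcing is the source of the two-derivative loss: since $a_0\in H^s$ yields only $\Delta a_0\in H^{s-2}$, the natural space for $(b^\eps,w^\eps)$ is $H^{s-2}\times(X^{s-1}\cap L^{2+})$.

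Next I would re-run the energy estimate \eqref{eq:energyest} of Theorem \ref{thm:base} for this linear system. Because the coefficients are uniformly bounded in $\eps$ by Theorem \ref{thm:base}, and the nonlocal term is handled exactly as there -- $L^2$-boundedness of the Riesz transform controls $\nabla^2 Q^\eps$ in $H^{s-2}$ by the corresponding norm of the density, while its low-frequency part lands in $L^{2+}$ rather than $L^2$ -- the same Gronwall argument closes and yields uniform bounds on $(b^\eps,w^\eps)$ together with the convergence $(b^\eps,w^\eps)\to(a_1,\nabla\phi_1)$ as $\eps\to0$, where $(a_1,\phi_1)$ solves the $\eps=0$ limit of the peeled system (a linear transport--Hamilton--Jacobi--Poisson system whose solvability and regularity follow from the very same estimates). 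The phase is then reconstructed by integrating $\d_t\psi^\eps=-\tfrac12 w^\eps\cdot(\nabla\phi^\eps+\nabla\phi_0)-\l Q^\eps$ in time, exactly as $\phi^\eps$ is built in the proof of Theorem \ref{thm:base}, which produces the $L^\I_{\mathrm{loc}}$ statement for the phase expansion.

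Finally I would close the induction. The data of the peeled system expands one level lower: from $A^\eps=A_0+\eps A_1+\cdots+\eps^{N_0}A_{N_0}+o(\eps^{N_0})$ one gets $b^\eps(0)=(A^\eps-A_0)/\eps=A_1+\eps A_2+\cdots+\eps^{N_0-1}A_{N_0}+o(\eps^{N_0-1})$, which satisfies Assumption \ref{asmp:expansion} at level $N_0-1$ for the peeled system, now posed at regularity $s-2$. Since $s>3+2N_0$ is precisely $(s-2)>3+2(N_0-1)$, the inductive hypothesis applies and gives $b^\eps=a_1+\eps a_2+\cdots+\eps^{N_0-1}a_{N_0}+o(\eps^{N_0-1})$ in $H^{(s-2)-2(N_0-1)}=H^{s-2N_0}$; multiplying by $\eps$ and adding $a_0$ yields the asserted expansion of $a^\eps$, and the analogous bookkeeping handles $\phi^\eps$ and $\nabla\phi^\eps$. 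The main obstacle, as in Theorem \ref{thm:base}, is the nonlocal Poisson term: one must check that at each stage $\nabla Q^\eps$ remains in $X\cap L^{2+}$ (never in $L^2$) uniformly in $\eps$, so that the energy functional $E$ of \eqref{eq:energyest} genuinely controls the peeled variables; the remainder is careful but routine bookkeeping of the two-derivative loss, and the details parallel those in \cite{CM-AA,Grenier98}.
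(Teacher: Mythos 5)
Your proposal takes essentially the same route as the paper: the paper's own proof also peels off the leading profile via $(b^\eps,\psi^\eps)=((a^\eps-a_0)/\eps,(\phi^\eps-\phi_0)/\eps)$, observes that this pair solves a system of the same type as \eqref{eq:S}, re-runs the existence and energy-estimate machinery of Theorem \ref{thm:base} (with the two-derivative loss caused by the $\Delta a_0$ forcing), and iterates, deferring the detailed bookkeeping to \cite{CM-AA,Grenier98}. Your write-up simply makes the induction on the expansion level and the regularity accounting ($s\mapsto s-2$ per level, consistent with $s>3+2N_0$) explicit.
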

At this stage, we see that the WKB approximation of the solution
holds on any bounded domain; there exist $\phi_0$ and $\beta_j$ such that
\[
 	u^\eps = e^{i\frac{\phi^\eps}{\eps}}(\beta_0 + \cdots
 	+ \eps^{N_0-1} \beta_{N_0-1} + o(\eps^{N_0-1})) \IN L^\I([0,T);H^{s-2N_0}_{\mathrm{loc}}(\R^2)).
\]
To show the approximation in Theorem \ref{thm:main2}
which is valid not on a bounded domain but on $\R^2$, 
we prepare the following two lemma.
\begin{lemma}\label{lem:weightL2}
Let $w$ be a real-valued function of $x\in \R^2$
such that $\nabla w \in L^\I$.
For a solution $u^\eps\in C((-T,T);H^1(\R^2))$ of \eqref{eq:SP2},
it holds that
\[
	\frac{d}{dt} \int w |u^\eps(t)|^2 dx
	=  \eps\Im \int (\nabla w\cdot \nabla u^\eps(t)) \overline{u^\eps(t)} dx.
\]
For a solution $(a^\eps,\phi^\eps)$ of \eqref{eq:S},
it holds that
\[
	\frac{d}{dt} \int w |a^\eps(t)|^2 dx
	=  \eps\Im \int (\nabla w\cdot \nabla a^\eps(t)) \overline{a^\eps(t)} dx+
	\int (\nabla w\cdot \nabla \phi^\eps(t)) |a^\eps(t)|^2 dx.
\]
\end{lemma}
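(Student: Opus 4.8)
The plan is to derive, in both cases, the local conservation law for the density $|u^\eps|^2$ (resp.\ $|a^\eps|^2$) and then integrate it against the weight $w$. The crucial structural fact is that the Poisson term enters as a \emph{real} potential multiplying $u^\eps$, hence is of gauge type and contributes nothing to $\d_t|u^\eps|^2=2\Re(\overline{u^\eps}\,\d_t u^\eps)$. First I would use the equation for $u^\eps$ in \eqref{eq:SPe2} to write $\d_t u^\eps=\frac{i\eps}{2}\Delta u^\eps-\frac{i\l}{\eps}P^\eps u^\eps$; since $P^\eps$ is real, the term $-\frac{i\l}{\eps}P^\eps|u^\eps|^2$ is purely imaginary and drops out, leaving the continuity equation
\[
	\d_t|u^\eps|^2=-\eps\,\Im(\overline{u^\eps}\Delta u^\eps)=-\eps\,\nabla\cdot\Im(\overline{u^\eps}\nabla u^\eps),
\]
valid in the sense of distributions (note $u^\eps\in H^1$ already guarantees the current $J^\eps:=\Im(\overline{u^\eps}\nabla u^\eps)\in L^1$, so the right-hand side is well defined). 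The same computation applied to the first line of \eqref{eq:S}, where the extra drift $-\nabla\phi^\eps\cdot\nabla a^\eps-\frac12 a^\eps\Delta\phi^\eps$ is already real, yields
\[
	\d_t|a^\eps|^2=-\nabla\cdot(|a^\eps|^2\nabla\phi^\eps)-\eps\,\nabla\cdot\Im(\overline{a^\eps}\nabla a^\eps).
\]

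Next I would multiply these identities by $w$ and integrate by parts, transferring each divergence onto $\nabla w$. For the first identity this gives $\frac{d}{dt}\int w|u^\eps|^2=\eps\int\nabla w\cdot J^\eps$, which is exactly $\eps\,\Im\int(\nabla w\cdot\nabla u^\eps)\overline{u^\eps}$; the real diagonal contribution $w|\nabla u^\eps|^2$ never appears because it is killed by the imaginary part. For the second identity the drift term produces, after the same integration by parts, the additional contribution $\int(\nabla w\cdot\nabla\phi^\eps)|a^\eps|^2$, and here $\nabla\phi^\eps\in X^{s+1}\hookrightarrow L^\I$ together with $|a^\eps|^2\in L^1$ makes $|a^\eps|^2\nabla\phi^\eps\in L^1$, so this term is finite.

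The main obstacle is that $w$ is only assumed to satisfy $\nabla w\in L^\I$, so $w$ itself may grow linearly and neither $\int w|u^\eps|^2$ nor the integration by parts is a priori legitimate. I would handle this by truncation: pick $\chi\in C_c^\I(\R^2)$ with $\chi\equiv1$ near the origin and set $w_R(x)=w(x)\chi(x/R)$, which is bounded with $\nabla w_R\in L^\I$, so the identity holds for each $w_R$ by a routine approximation of $w_R$ by $C_c^\I$ test functions (using $|u^\eps|^2\in L^1$ and $J^\eps\in L^1$). Letting $R\to\I$, the left-hand side converges by dominated convergence once $\int|w|\,|u^\eps|^2<\I$, the integrability guaranteed in the application by the weighted bounds of Assumption \ref{asmp:main2}. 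On the right, $\nabla w_R=\chi(\cdot/R)\nabla w+R^{-1}w\,\nabla\chi(\cdot/R)$: the first piece converges to $\nabla w$ pointwise with a bound dominated by $\|\nabla w\|_{L^\I}|J^\eps|\in L^1$, while the correction $R^{-1}w\,\nabla\chi(\cdot/R)$ is uniformly bounded and supported in the annulus $|x|\sim R$, so its pairing with $J^\eps\in L^1$ (resp.\ with $|a^\eps|^2\nabla\phi^\eps\in L^1$) tends to zero as the tail of an $L^1$ function. This passes the identity to the limit and completes the proof in both cases.
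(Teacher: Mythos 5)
Your proof is correct, and its core computation is the same as the paper's: multiply the equation by $\overline{u^\eps}$, take real parts so that the real potential $\l P^\eps$ drops out, and move one derivative onto $w$. You differ from the paper in two respects. First, for the second identity the paper does not redo the hydrodynamic computation: it obtains it from the first identity by substituting $u^\eps=a^\eps e^{i\phi^\eps/\eps}$, since $\nabla u^\eps=(\nabla a^\eps+\tfrac{i}{\eps}a^\eps\nabla\phi^\eps)e^{i\phi^\eps/\eps}$ gives pointwise
\[
	\eps\Im\big((\nabla w\cdot\nabla u^\eps)\overline{u^\eps}\big)
	=\eps\Im\big((\nabla w\cdot\nabla a^\eps)\overline{a^\eps}\big)
	+(\nabla w\cdot\nabla\phi^\eps)|a^\eps|^2;
\]
your direct derivation from the first line of \eqref{eq:S} is equivalent, though your phrase that the drift ``is already real'' is imprecise --- the terms $\nabla\phi^\eps\cdot\nabla a^\eps+\tfrac12 a^\eps\Delta\phi^\eps$ are complex, and what matters is that their contribution to $2\Re(\overline{a^\eps}\,\cdot\,)$ combines into the divergence $\nabla\cdot(|a^\eps|^2\nabla\phi^\eps)$, as your final formula correctly records. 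Second, your truncation with $w_R=w\chi(\cdot/R)$ is a genuine addition: the paper's three-line proof is formal and silently performs the integration by parts against a possibly linearly growing $w$, whereas you justify it, and your check that the error term $R^{-1}w\,\nabla\chi(\cdot/R)$ is uniformly bounded and supported in an annulus where the $L^1$ current $J^\eps$ has vanishing tail is exactly right. One caveat in your limiting step: dominated convergence on the left-hand side presupposes $\int|w|\,|u^\eps(t)|^2<\I$ at positive times, but Assumption \ref{asmp:main2} gives this only at $t=0$; propagating the finiteness is precisely the content of Lemma \ref{lem:weight}, which uses the present lemma, so as phrased there is a slight circularity. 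The standard repair is to pass to the time-integrated truncated identity and, for $w\ge0$ (as in the application $w=(1+|x|)^{\alpha/2^{j}}$, choosing $\chi$ radial and nonincreasing so that $w_R\uparrow w$), let $R\to\I$ by monotone convergence, which yields the finiteness and the identity simultaneously.
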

\begin{proof}
The first identity  follows from
\begin{align*}
	\frac{d}{dt} \int w |u^\eps(t)|^2 dx
	&{}= 2\Re \int w \d_t u^\eps(t) \overline{u^\eps(t)} dx\\
	&{}= - \eps\Im \int w \Delta u^\eps(t) \overline{u^\eps(t)} dx \\
	&{}=  \eps\Im \int (\nabla w\cdot \nabla u^\eps(t)) \overline{u^\eps(t)} dx,
\end{align*}
and so does the second one from this identity and $u^\eps=a^\eps e^{i\phi^\eps/\eps}$.
\end{proof}

The next lemma is the key for the proof.
\begin{lemma}\label{lem:weight}
Let $N\ge1$ be an integer and let 
Assumption \ref{asmp:expansion} be satisfied for some $N_0=2N$.
Let $a_j$ ($j\in[0,2N]$) be given in Proposition \ref{prop:expansion}.
Let $\alpha\in (0,1]$. If
\begin{equation}\label{eq:higherdecay}
	(1+|x|)^{\frac{\alpha}{2^{j}}}|a_j(t)|^2 \in L^1(\R^2), \qquad j=0,1,\dots, N
\end{equation}
holds at the initial time $t=0$, then \eqref{eq:higherdecay} holds for all $t\in [0,T)$.
\end{lemma}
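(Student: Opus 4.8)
The plan is to derive a closed hierarchy of evolution equations for the profiles $a_0,\dots,a_N$ given by Proposition \ref{prop:expansion} and then to propagate the weighted quantities
\[
	M_j(t):=\int_{\R^2}(1+|x|)^{\frac{\alpha}{2^j}}|a_j(t)|^2\,dx,\qquad j=0,1,\dots,N,
\]
by a triangular (cascade) Gronwall argument. Matching powers of $\eps$ in the first line of \eqref{eq:S}, using the expansions of $a^\eps$ and $\phi^\eps$ from Proposition \ref{prop:expansion}, shows that $a_0$ solves the transport equation in \eqref{eq:S0} and that, for $1\le j\le N$,
\[
	\d_t a_j + \nabla\phi_0\cdot\nabla a_j + \tfrac12 a_j\Delta\phi_0 = S_j,\qquad
	S_j:=\tfrac{i}2\Delta a_{j-1}-\sum_{k=1}^{j}\Bigl(\nabla\phi_k\cdot\nabla a_{j-k}+\tfrac12 a_{j-k}\Delta\phi_k\Bigr).
\]
By Proposition \ref{prop:expansion} each $a_i\in C([0,T);H^{s-2i})$ with $s-2i>4$ for $i\le N$, and $\nabla\phi_k,\Delta\phi_k\in L^\I$ uniformly on $[0,T)$; these bounds will serve as fixed constants.

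The structural heart is that the weight exponent halves with $j$: with $w_j:=(1+|x|)^{\alpha/2^j}$ one has $w_{j-1}=w_j^2$, $w_j\le w_{j-k}$ pointwise for $k\ge1$, and $|\nabla w_j|\le\frac{\alpha}{2^j}w_j$ because $\alpha/2^j\le1$. I would differentiate $M_j$ after first replacing $w_j$ by the bounded truncation $w_j\,\chi(\cdot/R)$, deriving the estimate with constants independent of $R$ and letting $R\to\I$ at the end (this also supplies the finiteness of $M_j(t)$, which is a priori unknown). Under one integration by parts the transport term and $\tfrac12 a_j\Delta\phi_0$ combine to leave only $\int(\nabla w_j\cdot\nabla\phi_0)|a_j|^2\le CM_j$ --- the gauge/transport cancellation already visible in Lemma \ref{lem:weightL2}. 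The source contributes $2\Re\int w_j\bar a_j S_j\le 2M_j^{1/2}\bigl(\int w_j|S_j|^2\bigr)^{1/2}$, and the zero-derivative pieces are immediate: $\int w_j|a_{j-k}\Delta\phi_k|^2\le\|\Delta\phi_k\|_{L^\I}^2\int w_{j-k}|a_{j-k}|^2=CM_{j-k}$, using $w_j\le w_{j-k}$.

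The crux is the two-derivative loss $\tfrac{i}2\Delta a_{j-1}$, together with the one-derivative terms $\nabla a_{j-k}$; these cannot be absorbed into weighted derivative quantities, which are unavailable, so I would instead use a weighted Gagliardo--Nirenberg inequality calibrated exactly to the halving of the exponent. Since $w_{j-1}=w_j^2$, the quantity
\[
	\int w_j|\Delta a_{j-1}|^2=\bigl\|(1+|x|)^{\alpha/2^{j+1}}\Delta a_{j-1}\bigr\|_{L^2}^2
\]
sits at the interpolation midpoint between $\bigl\|(1+|x|)^{\alpha/2^{j}}a_{j-1}\bigr\|_{L^2}^2=M_{j-1}$ (zero derivatives) and $\|a_{j-1}\|_{H^4}^2$ (no weight), whence $\int w_j|\Delta a_{j-1}|^2\le CM_{j-1}^{1/2}\|a_{j-1}\|_{H^4}$; likewise $\int w_j|\nabla a_{j-k}|^2$ lies between $M_{j-k}$ and $\|a_{j-k}\|_{H^2}^2$, giving $\le C(M_{j-k}+1)$. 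The Sobolev orders required are finite ($H^4$, $H^2$) and bounded by Proposition \ref{prop:expansion} under $s>3+4N$, so Young's inequality yields
\[
	\frac{d}{dt}M_j\le C\Bigl(M_j+\sum_{i=0}^{j-1}M_i+1\Bigr)\qquad\text{on }[0,T).
\]
Proving this weighted interpolation inequality --- for instance by commuting $(1+|x|)^{\alpha/2^{j+1}}$ through the Laplacian, whose derivatives are bounded since $\alpha/2^j\le1$, and then applying the ordinary Gagliardo--Nirenberg inequality --- is the main technical obstacle; the rest is routine.

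Finally I would close by induction on $j$. For the base case, $a_0$ solves a source-free transport equation, so $\frac{d}{dt}M_0\le CM_0$ and $M_0$ is finite and bounded on $[0,T)$ given $M_0(0)<\I$. For the inductive step, if $M_0,\dots,M_{j-1}$ are already known to be finite and bounded on $[0,T)$, they enter the displayed differential inequality as constants, and Gronwall's lemma together with $M_j(0)<\I$ (hypothesis \eqref{eq:higherdecay} at $t=0$) and the limit $R\to\I$ shows that $M_j$ is finite and bounded on $[0,T)$. This establishes \eqref{eq:higherdecay} for every $t\in[0,T)$ and $j=0,\dots,N$.
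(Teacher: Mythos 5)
Your proposal is essentially correct, but it follows a genuinely different route from the paper. The paper never writes down the profile hierarchy at all: it applies the second identity of Lemma \ref{lem:weightL2} to the full solution $(a^\eps,\phi^\eps)$ with weight $(1+|x|)^{\alpha/2^k}$ and then extracts the coefficient of $\eps^{2k}$. Because the Laplacian enters that identity only through the term $\eps\Im\int(\nabla w\cdot\nabla a^\eps)\overline{a^\eps}$, the extra factor of $\eps$ means no two-derivative loss ever appears; the price is that the $\eps^{2k}$-coefficient of $\int w|a^\eps|^2$ contains the cross terms $2\Re(a_{2k-l}\overline{a_l})$, $l\le k-1$, which is exactly why the hypothesis requires $N_0=2N$ (profiles up to index $2N$) and why the weight halves: those cross terms are controlled from below by Cauchy--Schwarz, putting the doubled weight $(1+|x|)^{\alpha/2^{k-1}}\le(1+|x|)^{\alpha/2^l}$ on $a_l$ (available by induction) and no weight on $a_{2k-l}\in L^2$. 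You instead keep the diagonal quantities $M_j$ only, work with the transport equations for the profiles $a_0,\dots,a_N$, and tame the genuine derivative-loss source $\tfrac{i}{2}\Delta a_{j-1}$ by a weighted interpolation that exploits the same halving, $w_j^2=w_{j-1}$. Your key inequality is true, but the proof you sketch (commuting $w_j^{1/2}$ through $\Delta$ and then applying Gagliardo--Nirenberg to $w_j^{1/2}a_{j-1}$) does not quite close: the top-order term it produces is the \emph{weighted} quantity $\|w_j^{1/2}\Delta^2 a_{j-1}\|_{L^2}$, which is not among the controlled norms. The correct implementation is a direct integration by parts putting all four derivatives on one factor and the full weight on the other,
\[
	\int w_j|\Delta a_{j-1}|^2
	\le \abs{\int w_j\,\overline{a_{j-1}}\,\Delta^2 a_{j-1}} + C\Sobn{a_{j-1}}{3}^2
	\le \Lebn{w_j a_{j-1}}{2}\Lebn{\Delta^2 a_{j-1}}{2} + C\Sobn{a_{j-1}}{3}^2,
\]
where $\Lebn{w_j a_{j-1}}{2}^2=\int w_{j-1}|a_{j-1}|^2=M_{j-1}$; the commutator terms carry only bounded derivatives of $w_j$ since $\alpha/2^j\le1$. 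With that fix (and the $R$-truncation you already propose, whose error terms are uniformly bounded since $w_j|\nabla\chi_R|\le C$), your cascade Gronwall argument goes through. As a trade-off, your method uses only the profiles $a_j$, $\phi_j$ with $j\le N$ and could in principle run under a weaker hypothesis than $N_0=2N$, whereas the paper's argument needs profiles up to $2N$ but avoids both the profile equations and any weighted interpolation lemma, making it technically lighter given Lemma \ref{lem:weightL2}.
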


\begin{proof}
We show \eqref{eq:higherdecay} by induction on $j$.

{\bf Step 1}.
We first consider $j=0$.
From Lemma \ref{lem:weightL2}, we have
\begin{align*}
	\frac{d}{dt} \int (1+|x|)^{\alpha} |a^\eps(t)|^2 dx
	={}&  \alpha \eps\Im \int \( \frac{x}{|x|(1+|x|)^{\alpha-1}}\cdot \nabla a^\eps(t)\) \overline{a^\eps(t)} dx\\
	&{}+\alpha\int \(\frac{x}{|x|(1+|x|)^{\alpha-1}}\cdot \nabla \phi^\eps(t)\) |a^\eps(t)|^2 dx.
\end{align*}
Let $\eps=0$ to obtain
\begin{align*}
	\int (1+|x|)^{\alpha} |a_0(t)|^2 dx
	={}& \int (1+|x|)^{\alpha} |A_0|^2 dx \\
	&{}+\alpha \int_0^t \int \(\frac{x}{|x|(1+|x|)^{\alpha-1}}\cdot \nabla \phi_0(s)\) |a_0(s)|^2 dx\,ds\\
	\le{}&\int (1+|x|)^{\alpha} |A_0|^2 dx \\
	&{}+\alpha t \norm{\nabla \phi_0}_{L^\I([0,t]\times \R^2)}
	\norm{a_0}_{L^\I([0,t],L^2)}\\
	<{}& \I.
\end{align*}

{\bf Step 2}.
We now assume for induction that \eqref{eq:higherdecay} holds for $j=0,1,\cdots,k-1$
($k\le N$) and show \eqref{eq:higherdecay} for $j=k$.
Comparing the $\eps^{2k}$-order term of the both sides of
\begin{align*}
	\frac{d}{dt} \int (1+|x|)^{\frac{\alpha}{2^k}} |a^\eps(t)|^2 dx
	={}&  \frac{\alpha}{2^k} \eps\Im \int \( \frac{x}{|x|(1+|x|)^{1-\frac{\alpha}{2^k}}}\cdot \nabla a^\eps(t)\) \overline{a^\eps(t)} dx\\
	&{}+\frac{\alpha}{2^k}\int \(\frac{x}{|x|(1+|x|)^{1-\frac{\alpha}{2^k}}}\cdot \nabla \phi^\eps(t)\) |a^\eps(t)|^2 dx,
\end{align*}
we deduce
\begin{multline*}
	\frac{d}{dt} \int (1+|x|)^{\frac{\alpha}{2^k}} \(|a_k(t)|^2 + \sum_{l=0}^{k-1}2\Re (a_{2k-l}(t)\overline{a_l(t)})\)dx \\
	= \frac{\alpha}{2^k} \Im \int \sum_{l=0}^{2k-1} \( \frac{x}{|x|(1+|x|)^{1-\frac{\alpha}{2^k}}}\cdot \nabla a_{2k-1-l}(t)\) \overline{a_{l}(t)} dx\\
	+\frac{\alpha}{2^k}\int \sum_{l_1+l_2+l_3=2k}\(\frac{x}{|x|(1+|x|)^{1-\frac{\alpha}{2^k}}}\cdot \nabla \phi_{l_1}(t)\) a_{l_2}(t)\overline{a_{l_3}(t)} dx.
\end{multline*}
Denote the right hand side by $e(t)$.
The weight function $x/|x|(1+|x|)^{\alpha/2^k-1}$ on the right side is bounded uniformly in $x$,
and so $|\int_0^t e(s)ds|<\I$ follows from the assumption and the H\"older inequality.
Since
\begin{align*}
		&\int (1+|x|)^{\frac{\alpha}{2^k}} 2\Re (a_{2k-l}(t)\overline{a_l(t)}) dx\\
		&{}\ge - 2\norm{a_{2k-l}(t)}_{L^2(\R^2)}
	\( \int (1+|x|)^{\frac{\alpha}{2^{k-1}}}|a_{l}(t)|^2 dx\)^{\frac12} \\
	&{}\ge - 2\norm{a_{2k-l}(t)}_{L^2(\R^2)}
	\( \int (1+|x|)^{\frac{\alpha}{2^{l}}}|a_{l}(t)|^2 dx\)^{\frac12}
	>-\I
\end{align*}
holds for $l=0,\cdots, k-1$ 
from the H\"older inequality and assumption of induction, we conclude that
\[
	\int (1+|x|)^{\frac{\alpha}{2^k}} |a_k(t)|^2 dx < \I,
\]
which completes the proof.
\end{proof}
\begin{proof}[Proof of Theorem \ref{thm:main2}]
Notice that the Assumption \ref{asmp:main2} implies that
Assumption \ref{asmp:expansion} is filled for $N_0=2N(>N)$.
Therefore, we have a unique solution $(a^\eps,\phi^\eps)$ of \eqref{eq:S}
and its expansion
\begin{equation}\label{eq:mainexpansion}
\begin{aligned}
	a^\eps &{}= a_0 + \eps a_1 + \cdots + \eps^{N} a_{N} + o(\eps^{N}), 
	\IN L^\I([0,T);H^{s-2N})\\
	\phi^\eps &{}= \phi_0 + \eps \phi_1 + \cdots + \eps^{N} \phi_{N} + o(\eps^{N}), \IN L^\I([0,T);L^\I_{\mathrm{loc}}), \\
	\nabla \phi^\eps &{}= \nabla\phi_0 + \cdots + \eps^{N}
	\nabla\phi_{N} + o(\eps^{N}), \IN L^\I([0,T);X^{s+1-2N}\cap L^{2+})
\end{aligned}
\end{equation}
by Proposition \ref{prop:expansion}.
Moreover, assumption of Lemma \ref{lem:weight} is also satisfied
and so \eqref{eq:higherdecay} holds for $j=1,2,\cdots,N$.
By the Taylor expansion, we have
\[
	\abs{e^{i\eps\phi_1}-\sum_{l_1=0}^N \eps^{l_1} \frac{(i\phi_1)^{l_1}}{l_1!}}
	\le \frac{\eps^{N+1}|\phi_1|^{N+1} }{(N+1)!}
\]
Recall that $|\phi_1(x)|=O(\log|x|)$ as $|x|\to\I$,
which gives
\[
	\abs{a_0e^{i\eps\phi_1}-\sum_{l_1=0}^N \eps^{l_1} a_0\frac{(i\phi_1)^{l_1}}{l_1!}}
	\le C\eps^{N+1} (1+\log\Jbr{x})^{N+1} |a_0| \in L^2
\]
together with \eqref{eq:higherdecay}. Thus,
\[
	a_0e^{i\eps\phi_2} = \sum_{l_1=0}^N \eps^{l_1} a_0\frac{(i\phi_2)^{l_1}}{l_1!} + o(\eps^N)\IN L^\I([0,T);L^2).
\]
Since a similar expansion holds for all term of the form
\[
	\eps^{k_1}a_{k_1} e^{i\eps^{k_2-1}\phi_{k_2}},
\]
combining the expansions \eqref{eq:mainexpansion}
and $u^\eps=a^\eps e^{i\phi^\eps/\eps}$, we conclude that
\begin{equation}\label{eq:goalWKB}
\begin{aligned}
	u^\eps e^{-i\frac{\phi_0}\eps} &{}= 
	e^{i\phi_1} e^{i\eps\phi_2} \cdots e^{i\eps^{N-1}\phi_N} e^{o(\eps^{N-1})}
	(a_0 + \cdots + \eps^N a^N + o(\eps^N)) \\
	&{} = \beta_0 + \eps \beta_1 + \cdots + \eps^{N-1}\beta_{N-1} + o(\eps^{N-1})	
\end{aligned}
\end{equation}
in $L^\I([0,T);L^2(\R^2))$, where $\beta_0=a_0e^{i\phi_1}$ and 
$\beta_j$ ($j\ge1$) is given by the following way:
For a positive integer $l$, we call a multi-index $\sigma \in (\N \cup \{0\})^l$
is a weighted partition of $l$ if $\sum_{k=1}^l k \sigma_k = l$.
The function $\beta_j$ ($j\ge1$) in \eqref{eq:goalWKB} is given explicitly as
\[
	\beta_j = e^{i\phi_1}\(a_j + \sum_{l=1}^j a_{j-l}\sum_{\sigma:\text{weigted partition of }l} \prod_{k=1}^l\frac{i^{\sigma_k}(\phi_{k+1})^{\sigma_k}}{\sigma_k!}\).
\]
Note that $\beta_j\in C([0,T);L^2(\R^2))$ follows from \eqref{eq:higherdecay}.
\end{proof}
\begin{remark}
The feature of the two-dimensional case is that
not only $\phi_0$ but also all of $\phi_j$ ($j\ge1$) may grow at the spatial infinity
though they are identically zero at the initial time.
This growth comes from Poisson terms (see \eqref{eq:log}).
This is why amplitudes are required to be in some weighted $L^2$ space. 
\end{remark}
\appendix

\section{Poisson equation in the two dimensional whole space}\label{sec:Poisson}
In this appendix, we consider the Poisson equation
\begin{equation}\label{eq:P}
	-\Delta P = f \IN \R^2
\end{equation}
with the conditions
\begin{gather}\label{eq:C1a}
	|\nabla P|\to 0 \text{ as } |x|\to \I, \quad P(0)=0, \\
\label{eq:C1b}
	\nabla P \in L^\I(\R^2).
\end{gather}
We briefly recall the higher dimension case $n\ge3$.
It is well known that the solution $P$ is defined by
the Fourier transform or by the Newtonian potential as
\begin{align}\label{eq:solP3a}
	P(x) ={}& \F^{-1} \left[ \frac{1}{|\xi|^2} \F f(\xi)\right](x) \\
		={}& \frac{1}{n(n-2)\omega_n} (|x|^{2-n}*f)(x), \label{eq:solP3b}
\end{align}
where $\omega_n$ denotes the volume of the unit sphere in $\R^n$.
In this case, it can be said that \eqref{eq:P} in $\R^n$
is posed with the condition
\begin{equation}\label{eq:C2}
	P\to 0 \text{ as } |x|\to \I, \quad  P \in L^\I(\R^n).
\end{equation}
For a good $f$, say $f \in \mathcal{S}(\R^n)$, the solution $P$ defined by
\eqref{eq:solP3a} or \eqref{eq:solP3b} satisfies \eqref{eq:C2},
and  is unique by Liouville's theorem.

In the two dimensional case, it is not possible to define the solution
by \eqref{eq:solP3a} in general (even in the distribution sense)
because of the singularity of $|\xi|^{-2}$.
In \cite{AN-MMAS,ZhSIAM,JW-CPDE}, the definition \eqref{eq:solP3a} is 
employed under several assumption on $f$ which 
provides $\F f(\xi)=O(|\xi|)$ as $\xi\to0$.
To realize it, it is almost necessary to suppose the following neutrality condition:
\begin{equation}\label{eq:A:neutrality}
	2\pi\F f(0) = \int_{\R^2} f(x) dx = 0.
\end{equation}
This condition is, however, very restrictive in some case.
For example, in our systems \eqref{eq:SP} or \eqref{eq:SPe},
this condition excludes all nontrivial solutions.
To avoid such a situation, we observe the fact that
\begin{equation}\label{eq:nablaP}
	\F^{-1} \left[ \frac{-i\xi}{|\xi|^2} \F f(\xi)\right](x)
\end{equation}
(which may be equal to $\nabla P$) is well-defined even in the two-dimensional
case, and we modify the condition \eqref{eq:C2} into \eqref{eq:C1a}--\eqref{eq:C1b},
so that the Poisson equation \eqref{eq:P} has a solution.
The idea is the following: 
If the gradient of $P$ was defined uniquely, then $P$ should be given uniquely
by the line integral of it under $P(0)=0$.

We denote $p^*=2p/(2-p)$ for $p<2$.
$p^*$ is increasing in $p$, and $1^*=2$.
\begin{theorem}\label{thm:2dPoisson}
\begin{itemize}
\item If $f \in L^{p_0}(\R^2)$ for some $p_0 \in (1,2)$, then
\begin{equation}\label{eq:formulaP}
	P(x) = -\frac{1}{2\pi} \int_{\R^2} \(\log\frac{|x-y|}{|y|}\) f(y) dy
\end{equation}
is well-defined and is a weak solution of \eqref{eq:P} in such a sense that
its weak derivative
\begin{equation}\label{eq:formulanP}
	\nabla P(x) = -\frac{1}{2\pi} \int_{\R^2} \frac{x-y}{|x-y|} f(y) dy
	\in L^{p_0^*}(\R^2)
\end{equation}
satisfies $\Jbr{\nabla P, \nabla \varphi}=-\Jbr{f,\varphi}$ for all $\varphi \in \mathcal{S}(\R^2)$.
Moreover, this solution satisfies \eqref{eq:C1a} and if $f\in L^1(\R^2)$ then
\begin{equation}\label{eq:log}
	\limsup_{|x|\to\I}\frac{|P(x)|}{\log\Jbr{x}} \le \frac{\Lebn{f}1}{2\pi}.
\end{equation}
\item If, in addition, $f$ is continuous and $\nabla f \in L^{q_0}(\mathbb{R}^2)$ 
for some $q_0>2$, then $P$ is in $C^2(\R^2)$ and is the unique classical solution of \eqref{eq:P}
with \eqref{eq:C1a}--\eqref{eq:C1b}.
Moreover, $P$ satisfies 
$\nabla P \in L^r(\R^2)$ for $r \in [p_0^*,\I]$, $\nabla^2 P \in L^p(\R^2)$
for $p\in [p_0,\I]$, and $\nabla^3 P \in L^{q_0}(\R^2)$.
\end{itemize}
\end{theorem}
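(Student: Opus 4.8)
The plan is to treat the two bullets separately, deriving the classical theory of the second from the distributional theory of the first. For the first bullet I would begin by checking that, for each fixed $x$, the kernel $K_x(y):=\log\frac{|x-y|}{|y|}=\log|x-y|-\log|y|$ lies in $L^{p_0'}(\R^2)$, where $p_0'\in(2,\I)$ is the conjugate exponent. The two logarithmic singularities at $y=0$ and $y=x$ are harmless, since any power of a logarithm is locally integrable, and a Taylor expansion gives $K_x(y)=O(|x|/|y|)$ at spatial infinity, which is $p_0'$-integrable precisely because $p_0'>2$. Hölder's inequality then makes $P(x)$ absolutely convergent, and the normalisation by $\log|y|$ yields $P(0)=0$ for free. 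Differentiating under the integral sign gives $\nabla P(x)=-\frac{1}{2\pi}\int\frac{x-y}{|x-y|^2}f(y)\,dy$, whose modulus is dominated by the Riesz potential $I_1|f|$, so the membership $\nabla P\in L^{p_0^*}$ with $\frac{1}{p_0^*}=\frac{1}{p_0}-\frac12$ is exactly the Hardy--Littlewood--Sobolev inequality. To confirm the weak formulation I would apply Fubini to transfer the derivative onto the test function and use the identity $\int\nabla_x\log|x-y|\cdot\nabla\varphi\,dx=-2\pi\varphi(y)$, i.e. that $-\frac{1}{2\pi}\log|\cdot|$ is the fundamental solution of $-\Delta$.

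The decay \eqref{eq:C1a} of $\nabla P$ and the growth bound \eqref{eq:log} are the delicate part. For the decay I would split the kernel $(x-y)/|x-y|^2$ at scale one into a far piece $K_\I$, supported in $\{|z|\ge1\}$, and a near piece $K_0$, supported in $\{|z|<1\}$. Since $K_\I\in L^{p_0'}$ and $f\in L^{p_0}$ with both exponents finite, $K_\I*f\in C_0(\R^2)$, which disposes of the far field. The near field $K_0*f$ is the crux: the kernel $1/|z|$ is \emph{not} in $L^{p_0'}(B_1)$ because $p_0'>2$, so a naive Hölder estimate diverges, and in fact $K_0*f$ need not decay for arbitrary $f\in L^{p_0}$. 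I would control it by exploiting that the local mass of $f$ vanishes at infinity (automatic once $f$ is continuous and decaying, as in the second bullet), approximating $f$ in $L^{p_0}$ by compactly supported functions for which $K_0*f$ is supported in a fixed dilate of $\mathrm{supp}\,f$. The bound \eqref{eq:log}, valid for $f\in L^1$, I would obtain by dominated convergence after writing $\log\frac{|x-y|}{|y|}=\log|x|+\log\frac{|x-y|}{|x|}-\log|y|$: the first term produces exactly the constant $\Lebn{f}{1}/(2\pi)$, while the remaining terms contribute $o(\log|x|)$.

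For the second bullet I would first upgrade $f$: Morrey's embedding turns $\nabla f\in L^{q_0}$ with $q_0>2$ into $f\in C^{0,1-2/q_0}$, and together with $f\in L^{p_0}$ this gives $f\in L^p\cap C_0$ for every $p\in[p_0,\I]$. The Hessian is, up to sign, the double Riesz transform $R_iR_j f$, so Calderón--Zygmund theory gives $\nabla^2 P\in L^p$ for all $p\in(1,\I)$; the endpoint $p=\I$ I would recover by the same near/far kernel splitting, now using $f\in L^{p_0}\cap L^\I$, and the endpoint $p=p_0$ is already in hand. Differentiating once more, $\partial_i\partial_j\partial_k P$ is, up to sign, $R_iR_j\partial_k f$, so $\nabla^3 P\in L^{q_0}$ follows directly from Calderón--Zygmund applied to $\nabla f$. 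A second use of Morrey's embedding, applied to $\nabla^2 P\in W^{1,q_0}$, shows $\nabla^2 P$ is continuous, whence $P\in C^2$ and $-\Delta P=f$ holds classically; moreover $\nabla P\in L^\I$ (which is \eqref{eq:C1b}), and interpolating with the $L^{p_0^*}$ bound of the first bullet yields $\nabla P\in L^r$ for all $r\in[p_0^*,\I]$.

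Finally, uniqueness among classical solutions satisfying \eqref{eq:C1a}--\eqref{eq:C1b} follows from a Liouville argument: the difference $Q$ of two such solutions is harmonic with $\nabla Q$ bounded and tending to $0$ at infinity, so each component of $\nabla Q$ is a bounded harmonic function vanishing at infinity, hence identically zero; since $Q(0)=0$ this forces $Q\equiv0$. I expect the genuine obstacle to be the decay of $\nabla P$ in the first bullet, precisely because $1/|x-y|$ fails to be locally $p_0'$-integrable, so that the near-field argument cannot rely on Hölder alone and must use decay properties of $f$; by contrast, the Calderón--Zygmund and Liouville steps of the second bullet are comparatively routine.
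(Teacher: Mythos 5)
Your overall architecture matches the paper's: well-definedness of $P$ via H\"older against the kernel $\log(|x-y|/|y|)\in L^{p_0/(p_0-1)}_y$, Hardy--Littlewood--Sobolev for \eqref{eq:formulanP}, Riesz-transform/Calder\'on--Zygmund bounds plus Sobolev--Morrey embeddings for the second bullet, and the identical Liouville argument for uniqueness. The genuine gap is in your proof of the growth bound \eqref{eq:log}, which is the only delicate claim of the first bullet. Your decomposition $\log\frac{|x-y|}{|y|}=\log|x|+\log\frac{|x-y|}{|x|}-\log|y|$ is not termwise integrable against $f$: for $f\in L^1\cap L^{p_0}$ one can have $\int\log|y|\,|f(y)|\,dy=\I$ (take $f(y)=|y|^{-2}(\log|y|)^{-2}$ for $|y|\ge2$ and zero otherwise; this $f$ is bounded and integrable, hence in every $L^{p_0}$), and then $\int\log\frac{|x-y|}{|x|}\,|f(y)|\,dy$ diverges as well; only the combination converges. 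Even if you group the last two terms into the single kernel $\log\frac{|x-y|}{|x||y|}$, dominated convergence cannot be invoked: that kernel has a logarithmic singularity at the moving point $y=x$, so $\sup_{|x|\ge 2}\abs{\log\frac{|x-y|}{|x|\,|y|}}/\log|x|$ is identically $+\I$ on $\{|y|\ge2\}$, and no integrable dominant exists; what must be controlled is the contribution of $y$ near $x$, i.e.\ $\int_{|y-x|\le\delta}\abs{\log|x-y|}\,|f(y)|\,dy$, and this requires quantifying the local mass of $f$ there. That is exactly what the paper supplies and what your sketch lacks: the paper renormalizes the kernel as $K(x,y)=\log(|x-y|/\Jbr{y})$ (the correction $\int\log(\Jbr{y}/|y|)f\,dy$ is finite by H\"older), splits at the $x$-dependent scale $|x-y|\gtrless\delta$ with $\delta=(\log\Jbr{x})^{-1}$, proves the uniform far-field bound $\sup_y|K_1|\le\log\Jbr{x}+\log\sqrt3+\log(1/\delta)$, and estimates the near-field piece by H\"older in $L^{p_0/(p_0-1)}$ on the shrinking ball, where $\norm{\log|w|}_{L^{q}(|w|\le\delta)}\to0$.

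Two smaller points. First, your claim that the endpoint $\nabla^2P\in L^\I$ follows ``by the same near/far kernel splitting, now using $f\in L^{p_0}\cap L^\I$'' is false: the kernel of $\nabla^2(-\Delta)^{-1}$ is homogeneous of degree $-2$, hence not locally integrable, and bounded $f$ does not give bounded second derivatives (the classical failure of Schauder estimates for merely continuous data). This step is, however, redundant in your own write-up, since your later observation $\nabla^2P\in W^{1,q_0}(\R^2)$ with $q_0>2$ already embeds into $L^\I\cap C^0$ --- which is also how the paper argues, via Gagliardo--Nirenberg. Second, your worry about the pointwise decay of $\nabla P$ under the first bullet's hypotheses is legitimate, and your proposed remedy (approximating $f$ in $L^{p_0}$ by compactly supported functions) does not close it either, precisely because H\"older fails for the near-field kernel; for comparison, the paper deduces \eqref{eq:C1a} directly from the membership $\nabla P\in L^{p_0^*}$, i.e.\ it reads the vanishing of $\nabla P$ at infinity in this averaged sense, and a genuinely pointwise statement is available only under the second bullet's hypotheses, where $f$ is continuous and decays.
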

\begin{remark}
The operator $\nabla (-\Delta)^{-1}:=-\F^{-1}i\xi/|\xi|^2\F$ is 
defined as a bounded operator from $L^{p_0}(\R^2)$ to $L^{p_0^*}(\R^2)$
for $p_0\in(1,2)$.
Remark that both \eqref{eq:nablaP} and \eqref{eq:formulaP} make sense
for $f\in L^{p_0}(\R^2)$, $p_0\in(1,2)$.
Therefore, it can be said that
\eqref{eq:formulaP} is one of the ``proper'' integral of \eqref{eq:nablaP}.
Remark that, from this point of view, the Newtonian potential
 $-(2\pi)^{-1}(\log|x|*f)$ is not proper (see Proposition \ref{prop:convolution}
and the consequent remarks, below).
\end{remark}
\begin{remark}\label{rmk:nPL2}
Note that $\nabla P \in L^2(\R^2)$ only if $f$ satisfies the neutrality condition
$-2\pi\F f(0)=\int_{\R^2} f dx= 0$.
This is because $\Lebn{\nabla P}2 = \Lebn{|\xi|^{-1}\F f}2$.
\end{remark}
\begin{proof}
Recall that $\log|x| \in L^p_{\mathrm{loc}}(\R^2)$ for all $1\le p <\I$ and
$\log(|x-y|/|y|)=O(|y|^{-1})$ as $|y|\to\I$.
Therefore, it follows that
$\log(|x-y|/|y|) \in L^{p_0/(p_0-1)}_y(\R^2)$ for any fixed $x\in \R^2$,
and so that $P$ is well-defined for $f\in L^{p_0}(\R^2)$ by the H\"older inequality .
One easily verifies that the weak derivative of $P$ is given by \eqref{eq:formula}.
By the Hardy-Littlewood-Sobolev inequality, \eqref{eq:formulanP} is also well-defined
for $f\in L^{p_0}(\R^2)$ and belongs to $L^{p_0^*}(\R^2)$.
Thus, \eqref{eq:C1a} is satisfied.
A computation shows that $\F(x/|x|^2) = -i\xi/|\xi|^2$.
Therefore, we see that
$\nabla P= -\F^{-1}[(i\xi/|\xi|^2)\F f]$ solves \eqref{eq:P} in the distribution sense.
To complete the proof of the former part, we show \eqref{eq:log}.
Set $K(x,y) := \log \frac{|x-y|}{\Jbr{y}}$
and
\begin{align*}
	K_1(x,y;\delta)&{}:={\bf 1}_{\{|x-y|\ge\delta\}}K(x,y), &
	K_2(x,y;\delta)&{}:={\bf 1}_{\{|x-y|\le\delta\}}K(x,y),
\end{align*}
where $\Jbr{y}=\sqrt{1+|y|^2}$
and $\delta\in (0,1]$ to be chosen later.
Let us first show that
\begin{equation}\label{eq:estK1}
	\sup_{y} \abs{K_1(x,y;\delta)} \le \log\Jbr{x} + \log \sqrt3 + \log\frac1\delta,
	\qquad \forall x \in \R^2.
\end{equation}
Put $x-y=-w$. Notice that the support of $K_1$ is written as $\{|w|\ge \delta \}$.
By triangle inequality, we obtain
\[
	\log \frac{|w|}{\sqrt{1+(|w|+|x|)^2}} \le 
	\log \frac{|w|}{\Jbr{w+x}} \le
	\log \frac{|w|}{\sqrt{1+(|w|-|x|)^2}}.
\]
The left hand side is always negative and monotone increasing in $|w|(\ge \delta)$,
and so we have the following bound:
\begin{align*}
	\abs{\log\frac{|w|}{\sqrt{1+(|w|+|x|)^2}}} \le {}&
	\abs{\log\frac{\delta}{\sqrt{1+(\delta+|x|)^2}}} \\
	\le {}& \log \sqrt3 + \log \Jbr{|x|} + \log \frac1\delta,
\end{align*}
where we have used the relation $1\le 1+(\delta+|x|)^2 \le 3(1+|x|^2)$ 
for $\delta \in (0,1]$.
On the other hand, the right hand side is, in $|w|$,
increasing if $\delta \le |w| \le |x|+1/|x|$
and decreasing if $|w|\ge |x|+1/|x|$, and tends to zero as $|w|\to\I$.
Therefore,
\[
	\abs{\log \frac{|w|}{\sqrt{1+(|w|-|x|)^2}}} \le
	\max \( \log\Jbr{x} , - \log \frac{\delta}{\sqrt{1+(\delta-|x|)^2}}  \).
\]
Then, to show \eqref{eq:estK1}, it suffices to note that
\[
	-\log \frac{\delta}{\sqrt{1+(\delta-|x|)^2}}  \le \log\sqrt2 + \log\Jbr{|x|}
	+\log \frac1\delta.
\]
It follows from \eqref{eq:estK1} that
\begin{equation*}
	\frac{\abs{\int_{\R^2} K_1(x,y;\delta)f(y)dy}}{\log\Jbr{x}}
	\le \Lebn{f}1 + \frac{\Lebn{f}1(\log\sqrt3 + \log(1/\delta))}{\log \Jbr{x}}.
\end{equation*}
On the other hand, applying the inequality$\Jbr{|x+w|} \le \sqrt3 \Jbr{x}$
for $|w|\le \delta \le 1$, we obtain
\begin{align*}
	\norm{K_2(x,\cdot;\delta)}_{L^q_y} 
	&\le \norm{\log\Jbr{w+x}}_{L^{q}(|w|\le\delta)} + \norm{\log|w|}_{L^{q}(|w|\le\delta)}, \\
	&\le (\pi \delta^2)^{\frac1q} (\log\Jbr{x}+\log\sqrt3) + \norm{\log|w|}_{L^{q}(|w|\le\delta)},
\end{align*}
for $q=p_0/(p_0-1)$, which yields
\begin{equation}\label{eq:estK2}
\begin{aligned}
	\frac{\abs{\int_{\R^2} K_2(x,y;\delta)f(y)dy}}{\log\Jbr{x}}
	\le{}& (\pi\delta^2)^{\frac1q}\Lebn{f}{p_0}\\
	&{}+ \Lebn{f}{p_0}\frac{(\pi\delta^2)^{\frac1q}\log\sqrt3 + \norm{\log|w|}_{L^{q}(|w|\le\delta)}}{\log\Jbr{x}}.
\end{aligned}
\end{equation}
Thus, we let $\delta=(\log\Jbr{x})^{-1}$ to conclude from \eqref{eq:estK1} and \eqref{eq:estK2} that
\begin{align*}
	\frac{|P(x)|}{\log\Jbr{x}}  \le {}&
	\frac{\abs{\int_{\R^2} K_1(x,y;(\log\Jbr{x})^{-1})f(y)dy}}{2\pi\log\Jbr{x}} \\
	&{}+ \frac{\abs{\int_{\R^2} K_2(x,y;(\log\Jbr{x})^{-1})f(y)dy}}{2\pi\log\Jbr{x}}
	+ \frac{|\int_{\R^2} \log (\frac{\Jbr{y}}{|y|})f(y)dy|}{2\pi\log\Jbr{x}} \\
	\to {}& \frac{\Lebn{f}1}{2\pi}
\end{align*}
as $|x|\to\I$, where we have used the fact that $\log ({\Jbr{y}}/{|y|}) \in L^{p_0/(p_0-1)}(\R^2)$.
\smallbreak

Let us proceed to the proof of the second part of the theorem.
Note that, for all $j,k,l \in \{1,2\}$, it holds that
\[
	\d_j\d_k P = R_j R_k f, \quad \d_j \d_k \d_l P = R_jR_k\d_l f ,
\]
where $R_j$ denotes the Riesz transform $\F^{-1} (-i\xi_j/|\xi|)\F$.
Applying the $L^p$-boundedness of the Riesz transform ($1<p<\I$),
we see that
\[
	\d_j\d_k P \in L^p(\R^2), \, \forall p \in [p_0,\I) ,\quad
	\d_j \d_k \d_l P \in L^{q_0}(\R^2).
\]
Then, $\d_j\d_k P \in L^\I(\R^2)$ also follows the Gagliardo-Nirenberg inequality.
The Hardy-Littlewood-Sobolev yields $\nabla P \in L^r(\R^2)$ for $r\in [q_0,\I)$.
Finally, a use of H\"older inequality shows $\nabla P\in L^\I(\R^2)$.
By the continuity argument, we conclude that $P \in C^2(\R^2)$.

We finally prove the uniqueness of the classical solution.
Let $P_1,P_2$ be two solutions of \eqref{eq:P} with \eqref{eq:C1a}--\eqref{eq:C1b}.
Then, $w:=P_1-P_2$ is a harmonic function.
Differentiating $\Delta w=0$ in $x_1$,
we see that $\d_1 w$ is also a harmonic function on $\R^2$.
Since we have already known that $\d_1 w$ is bounded,
$\d_1 w$ is a constant.
However, $\d_1 w \to 0$ as $|x|\to\I$ and so $\d_1 w \equiv 0$.
Similarly, $\d_2 w \equiv 0$.
Therefore, $w$ is a constant and so $w(x) \equiv w(0)=0$,
which shows $P_1 \equiv P_2$.
\end{proof}
\subsection{A solution given by the Newtonian potential}
We can also give a rigorous meaning of the Newtonian potential
\begin{equation}\label{eq:2Dconvolution}
	\widetilde{P}(x)=-\frac{1}{2\pi}\int_{\R^2} (\log|x-y|) f(y)dy
\end{equation}
as a solution of the Poisson equation.
Notice that $-\frac{1}{2\pi}\log|x|$ is the Newtonian kernel in two dimensions and
so that $\widetilde{P}$ is a two-dimensional version of \eqref{eq:solP3b}.
\begin{proposition}\label{prop:convolution}
Let $f \in L^{p_0}(\R^2)$ for some $p_0 \in (1,2)$ and let $\widetilde{P}$ be as
in \eqref{eq:2Dconvolution}.
If $\widetilde{P}(x)$ is finite at some $x \in \R^2$,
then it is finite for all $x\in \R^2$ and,
moreover, $\widetilde{P}(x)=P(x)+\widetilde{P}(0)$,
where $P$ is the solution of \eqref{eq:P} with \eqref{eq:C1a}--\eqref{eq:C1b}
given by Theorem \ref{thm:2dPoisson}.
\end{proposition}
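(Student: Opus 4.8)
The plan is to exploit the pointwise splitting of the Newtonian kernel
\[
	\log|x-y| = \log\frac{|x-y|}{|y|} + \log|y|,
\]
whose first summand is precisely the kernel of \eqref{eq:formulaP} and whose second summand does not depend on $x$. The crucial input is already contained in the proof of Theorem \ref{thm:2dPoisson}: for every fixed $x$ one has $\log(|x-y|/|y|) \in L^{p_0/(p_0-1)}_y(\R^2)$, the logarithmic singularities at $y=0$ and $y=x$ being harmless since $\log|x|\in L^q_{\mathrm{loc}}(\R^2)$ for all finite $q$, and the decay $O(|y|^{-1})$ at infinity lying in $L^{p_0/(p_0-1)}$ because $p_0/(p_0-1)>2$. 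By H\"older's inequality this gives
\[
	\int_{\R^2} \abs{\log\frac{|x-y|}{|y|}}\,\abs{f(y)}\,dy < \I
	\qquad \text{for all } x\in\R^2,
\]
so that the integral defining $P(x)$ is absolutely convergent for every $x$.

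First I would transfer this to the level of absolute integrability for $\widetilde P$. Writing $\log|y| = \log|x-y| - \log(|x-y|/|y|)$ and noting that the last term is absolutely integrable against $|f|$, the quantities $\int_{\R^2}\abs{\log|y|}\,\abs{f(y)}\,dy$ and $\int_{\R^2}\abs{\log|x-y|}\,\abs{f(y)}\,dy$ are finite simultaneously. Since the former is independent of $x$, finiteness of $\widetilde P$ at a single point $x_0$ forces $\int_{\R^2}(\log|y|)f(y)\,dy$ to be absolutely convergent, and this in turn makes $\widetilde P(x)$ finite for every $x\in\R^2$. This proves the first assertion.

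With all three integrals now known to be absolutely convergent, the identity follows from linearity of the Lebesgue integral applied to the splitting above:
\[
	\widetilde P(x) = -\frac1{2\pi}\int_{\R^2}\log\frac{|x-y|}{|y|}\,f(y)\,dy
	- \frac1{2\pi}\int_{\R^2}(\log|y|)\,f(y)\,dy = P(x) + c,
\]
with $c:=-\frac1{2\pi}\int_{\R^2}(\log|y|)f(y)\,dy$ independent of $x$. Setting $x=0$ and using $P(0)=0$ identifies $c=\widetilde P(0)$, which is the claimed formula. I do not anticipate a genuine obstacle: the only step requiring care is the absolute integrability of $\log(|x-y|/|y|)$ against $f$, and this is exactly the estimate that underlies the well-definedness of $P$ in Theorem \ref{thm:2dPoisson}. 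Thus the proposition reduces to bookkeeping around that theorem.
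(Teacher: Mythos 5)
Your proof is correct and takes essentially the same route as the paper: the paper's (one-line) proof observes that $-\frac{1}{2\pi}\int_{\R^2}\log\bigl(|x_1-y|/|x_2-y|\bigr)f(y)\,dy$ is finite for any $x_1,x_2\in\R^2$, of which your splitting $\log|x-y|=\log(|x-y|/|y|)+\log|y|$ is exactly the case $x_2=0$, with the integrability of the difference kernel borrowed, as you do, from the proof of Theorem \ref{thm:2dPoisson}. Your write-up simply makes explicit the bookkeeping (simultaneous absolute convergence and the identification $c=\widetilde{P}(0)$ via $P(0)=0$) that the paper leaves to the reader.
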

Notice that the proposition implies the following:
\begin{itemize}
\item If $\widetilde{P}(x)$ diverges at some $x\in \R^2$ then
it necessarily diverges for all $x\in \R^2$ under the same assumption on $f$.
\item The difference between $P$ and $\widetilde{P}$ is merely
a constant $\widetilde{P}(0)$.
However, when we consider $\widetilde{P}$, 
we need an additional assumption on $f$
only for saying that this constant is finite.
\item If $f\in L^{p_0}(\R^2)$ is so that $\widetilde{P}(0)$ is finite,
then $\widetilde{P}$ is a weak solution of \eqref{eq:P} with 
the condition $P(0)=\widetilde{P}(0)$ and $|\nabla P| \to 0$ as $|x|\to\I$.
\end{itemize}
The proof of this proposition is obvious: It suffices to mention that, for any $f \in L^{p_0}(\R^2)$ ($p_0\in(1,2)$) and
$x_1$, $x_2 \in \R^2$,
\[
	-\frac{1}{2\pi}\int_{\R^2} \(\log\frac{|x_1-y|}{|x_2-y|}\) f(y)dy
\]
is finite.

\subsection*{Acknowledgments}
The author expresses his deep gratitude
to Professors Yoshio Tsutsumi and Remi Carles for fruitful discussions.
Deep appreciation goes to Professor Hideo Kubo 
for his valuable advice and constant encouragement.
This research is supported by JSPS fellow.

\bibliographystyle{amsplain}
\bibliography{caustic}

\end{document}